\definecolor{darkblue}{RGB}{30,30,200}
\newtheorem{theorem}{Theorem}[section] % 1st argument is your name for it
\newtheorem{lemma}[theorem]{Lemma}     % 2nd argument is what is printed
\newtheorem{corollary}[theorem]{Corollary}
\newtheorem{proposition}[theorem]{Proposition}
\theoremstyle{definition}
\newtheorem{definition}[theorem]{Definition}
\newtheorem{example}[theorem]{Example}
\newtheorem{remark}[theorem]{Remark}
\title[quasitriangular operator algebras]% end with percent
{Quasitriangular Operator Algebras} % This is the full title of the paper
\author[M.Amini, M. Moradi, I. Mousavi]{Massoud Amini, Mehdi Moradi, Ismaeil Mousavi}
 \address{
	Department of Pure Mathematics\\ Faculty of Mathematical Sciences\\
	Tarbiat Modares University\\
	Tehran\\ Iran}
	\email{mamini@modares.ac.ir\\ m-moradi@modares.ac.ir\\ ismaeilmousavi@gmail.com}
\subjclass[2010]{Primary: 46K50; Secondary: 46L07}
\keywords{operator algebras, quasidiagonal, quasitriangular, block triangular, triangular AF, residual FD, essentially embedded}
\begin{document}
\maketitle
\nonfrenchspacing

\begin{abstract}
We give characterizations of quasitriangular operator algebras along the line of Voiculescu's characterization of quasidiagonal $C^*$-algebras.
\end{abstract}

% --------------------------------------------------------
% -*-TeX-*- -*-Soft-*- Soft Wrapping
% --------------------------------------------------------
% AMS-LaTeX Paper ****************************************
% --------------------------------------------------------
% Submitted:      Trans.Amer.Math.Soc. in February 1995
% Final Version:  July 1995
% Accepted:       June 1995
% --------------------------------------------------------
% This is a journal top-matter template file
% for use with AMS-LaTeX.
%%%%%%%%%%%%%%%%%%%%%%%%%%%%%%%%%%%%%%%%%%%%%%%%%%%%%%%%%%

% -----------------------------------------------------------

\section{Introduction}

Quasitriangular operators on a Hilbert space were
introduced and studied in the 60's by Paul Halmos in \cite{hal} (and later in \cite{hal2}). As Halmos also mentions, the notion was already used (and indeed played a central role) in the proof of the Aronszjn-Smith theorem \cite{as} on the existence of invariant subspaces for compact operators and its extensions (like that of Arveson and Feldman \cite{af}). These operators naturally generalize the notion of triangular matrices.

Triangular operator algebras were studied by John Ringrose \cite{r}, under the name of nest algebras, and discussed in the Ph.D. thesis of his student, Christopher Lance \cite{l}. The terminology was used first in a paper by Kadison and Singer \cite{ks}. There have been various characterizations of triangular subalgebras of specific classes of $C^*$-algebras, including that of Power for triangular subalgebras of AF-algebras \cite{p}.

The natural generalization of these to quasitriangular algebras were introduced by William Arveson \cite{ar} and studied by him and coauthors in \cite{fam}. All these studies are in the spirit of nest algebras. The first instance of a theory independent of the notion of nest algebras was presented by Dan Voiculescu in \cite{v}. Here quasidiagonal $C^*$-algebras are defined in terms of representations, something which is anticipated by the original ideas of Halmos (compare this with the approach of Bruce Wagner \cite{w} based on nest algebras). Further characterizations are given by Marius Dadarlat \cite{d}.

The main objective of this paper is to characterize quasitriangular (non selfadjoint) operator algebras in terms of their representations. Note that in the selfadjoint case (that is, for $C^*$-algebras) the  notions of quasidiagonality and quasitriangularity  coincide, and so such a characterization is a non selfadjoint version of the results of Voiculescu and Dadarlat. The first characterization relies on a result of Arveson on the extension of completely positive (c.p.) maps from operator systems to the enveloping $C^*$-algebra \cite{ar1} and the second is proved directly. We also investigate basic properties of quasitriangular operator algebras (like stable finiteness and existence of trace) and provide examples and non-examples. Our approach is based on finite dimensional approximation and we do not allude to the traditional approach based on nest algebras.

This paper could be regarded only as the starting point in comparing and contrasting quasitriangularity (of the non selfadjoint operator algebras) versus quasidiagonality (of selfadjoint ones). Although the two phenomena (which coincide in the selfadjoint case) show many similarities, the quasitriangularity has a pathological behavior which does not appear in quasidiagonality: while a diagonal (i.e., commutative) algebra is always quasidiagonal, the analogous thing does not happen in the non selfadjoint realm, namely,  a triangular algebra may fail to be quasitriangular. Indeed the Cuntz $C^*$-algebra $\mathcal O_n$ has a wealth of maximal triangular subalgebras which are not finite and so could not be quasitriangular. This phenomenon is due to the break of symmetry in the non selfadjoint case. Indeed, one has two options for a one way version of quasidiagonality (that is, upper or lower quasitriangularity) and choosing each of these is in the price of loosing the other. This might seem a bit annoying at the first glance, but we trust that it would be a source of diversity for quasitriangular algebras which is absent in the symmetric quasidiagonal case. Though this feature may prove to be useful and so calls for an independent research, here we could only sketch it and yet have no clear idea how one may use this potential in applications with an intrinsic lack of symmetry.

% -----------------------------------------------------------
\section{Definitions and Results}

A Banach algebra $T$ is called {\it triangular approximate finite dimensional} (TAF) if it is a norm-closed subalgebra of a unital AF $C^*$-algebra $A$ such that $D:=T\cap T^*$ is a masa in $A$ which satisfies the Str\u{a}til\u{a}-Voiculescu condition \cite{sv}: there is a nested sequence $(A_k)$ of finite dimensional unital subalgebras of $A$ with a dense union such that $D_k:=D\cap A_k$ is a masa in $A_k$ \cite{ppw}. If $T$ is TAF then $T$ is the direct limit of the nested family $(T\cap A_k)$ \cite[Corollary 2.3]{ppw}, and conversely, for an increasing sequence of triangular algebras $T_k\subseteq A_k$ with diagonal $D_k$, the direct limit $T$ of $T_k$'s is TAF \cite[Theorem 2.6]{ppw}.

\begin{definition}\label{taf} An operator algebra $A$ is called TAF-embeddable if it is embedded completely isometrically into a TAF algebra.
\end{definition}

An operator algebra $A$ is called {\it residually finite dimensional} (RFD) if there is a family (possibly uncountable) of c.c. homomorphisms $\rho_i: A\to \mathbb M_{k(i)}(\mathbb C)$ which are asymptotically isometric, that is,  $\|a\|=\lim_i\|\rho_i(a)\|$, for each $a\in A$ (see \cite[Chapter 7]{bo}, for the same notion for $C^*$-algebras, where $\rho_i$'s are assumed to be $*$-homomorphisms). Next we need to define a triangular version of the above notion. By a triangular subalgebra of $\mathbb M_{k}$ we mean a subalgebra of the form $\mathbb T_{k_1}\oplus\cdots\oplus\mathbb T_{k_n}$, where $\mathbb T_{k_i}$ is the algebra of all upper triangular matrices in $\mathbb M_{k_i}$.

\begin{definition}\label{trfd} An operator algebra $A$ is called TRFD if it is RFD and the range of each homomorphism $\rho_i: A\to \mathbb M_{k(i)}(\mathbb C)$ in the above definition is a triangular subalgebra of $\mathbb M_{k_i}$.
\end{definition}

Two representations $\pi: A\to \mathbb B(H)$ and $\sigma: A\to \mathbb B(K)$ are {\it approximately unitarily equivalent} if there is a sequence $u_n: H\to K$ of unitary operators such that $\|\sigma(a)-u_n\pi(a)u_n^*\|\to 0,$ for each $a\in A$. Note that when both $\pi$ and $\sigma$ have ranges inside triangular operator algebras (in the sense of Kadison and Singer \cite{ks}, that is, the operator algebras $A$ of $\mathbb B(H)$ with $A\cap A^*=D$, where $D$ is the set of diagonal operators), then in each step, $u_n\pi(A)u_n^*$ is again a triangular operator algebra (since triangularity is preserved under unitary equivalence). This is along the general idea of Kadison and Singer to have non selfadjoint operator algebras whose elements could be simultaneously triangularized (c.f. \cite[page 245]{mss}, \cite[Theorem 3.2.1]{ks}).

Our first main result asserts that a separable TRFD operator algebras is TAF-embeddable if any pair of finite dimensional representations are approximately unitarily equivalent after adding an enough large multiple of one of them to both, providing a non selfadjoint analog of a result due to Dadarlat \cite{d2}.

\begin{theorem}\label{tafe}
Let $A$ be a separable TRFD operator algebra such that for each pair of finite dimensional representations $\sigma_i: A\to \mathbb M_k(\mathbb C)$, $i=1,2$, with triangular ranges, there is a positive integer $N\geq 1$ such that $(N+1)\sigma_1$ is approximately unitarily equivalent to $\sigma_2\oplus N\sigma_1$. Then $A$ is TAF-embeddable. 	
\end{theorem}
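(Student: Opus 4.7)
The plan is to adapt Dadarlat's intertwining argument \cite{d2} to the triangular non-selfadjoint setting: use the approximate unitary equivalence hypothesis to assemble the finite-dimensional triangular representations supplied by TRFD into a nested inductive system whose limit provides a completely isometric embedding of $A$ into a TAF algebra.

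First, using separability of $A$ together with the TRFD hypothesis, extract a countable asymptotically isometric family $\sigma_n \colon A \to \mathbb{M}_{k_n}(\mathbb{C})$ of c.c.\ homomorphisms with triangular ranges. Concretely, fix a countable dense subset $(a_j) \subset A$; for each $(j,\ell) \in \mathbb{N}^2$ select some $\rho$ from the TRFD family with $\|\rho(a_j)\| > \|a_j\| - 1/\ell$, and enumerate the resulting countable subfamily. Fix also a nested exhaustion $F_1 \subset F_2 \subset \cdots$ of a dense subset of $\bigcup_k \mathbb{M}_k(A)$ (to secure complete, rather than merely isometric, embedding at the end), and a summable sequence $\varepsilon_n > 0$.

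Next, build inductively representations $\pi_n \colon A \to \mathbb{M}_{m_n}(\mathbb{C})$ with triangular ranges. Set $\pi_1 := \sigma_1$; given $\pi_n$, apply the hypothesis with $\sigma_1 := \pi_n$, $\sigma_2 := \sigma_{n+1}$ to produce an integer $N_n \geq 1$ and a unitary $v_n$ with
\begin{equation*}
\bigl\| (N_n+1)\pi_n(a) - v_n\bigl(\sigma_{n+1}(a) \oplus N_n\pi_n(a)\bigr)v_n^* \bigr\| < \varepsilon_n \qquad (a \in F_n).
\end{equation*}
Define $\pi_{n+1} := (N_n+1)\pi_n$ on $\mathbb{C}^{m_{n+1}}$ with $m_{n+1} := (N_n+1)m_n$; its range is a triangular subalgebra of $\mathbb{M}_{m_{n+1}}(\mathbb{C})$, it contains $\pi_n(A)$ as the first diagonal block via an inclusion $\iota_n \colon \mathbb{M}_{m_n}(\mathbb{C}) \hookrightarrow \mathbb{M}_{m_{n+1}}(\mathbb{C})$ that respects the standard upper-triangular form, and it approximately contains $\sigma_{n+1}$ via $v_n$. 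An Elliott-style approximate intertwining with summable errors and exhausting sets $F_n$ then yields a completely isometric homomorphism $\pi \colon A \to B := \varinjlim (\mathbb{M}_{m_n}(\mathbb{C}), \iota_n)$ whose image lies in $T := \overline{\bigcup_n \pi_n(A)}$ (identified inside $B$ via the $\iota_n$). Since the $\iota_n$ preserve the standard diagonal masas, $T \cap T^*$ is a masa in the AF algebra $B$ satisfying the Str\u{a}til\u{a}--Voiculescu condition, so $T$ is TAF by \cite[Theorem 2.6]{ppw}, proving $A$ is TAF-embeddable.

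The main obstacle is the interplay of approximate unitary equivalence with the triangular structure: the hypothesized $v_n$ need not normalize the standard diagonal of the target, and triangularity is \emph{not} preserved under arbitrary unitary conjugation. One must therefore post-compose with a permutation unitary so that $v_n(\sigma_{n+1} \oplus N_n\pi_n)v_n^*$ sits in the canonical upper-triangular form, which is possible (with controllable additional error) because any triangular subalgebra of $\mathbb{M}_m(\mathbb{C})$ is permutation-conjugate to the standard one. Finally, Arveson's extension theorem \cite{ar1} enters at each step to extend c.c.\ maps defined on finite-dimensional operator subsystems of $A$ to u.c.p.\ maps on the enveloping $C^*$-algebra (via Paulsen's off-diagonal trick on $A+A^*$), and then restrict back to $A$; this is what makes the perturbation-and-approximation inductive step close up within the category of operator algebras.
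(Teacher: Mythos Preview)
Your inductive construction has the roles of the two representations reversed, and as written the limit map cannot be shown to be completely isometric. You set $\pi_{n+1}:=(N_n+1)\pi_n$ and take $\iota_n$ to be the block-diagonal inclusion, so that $\iota_n\circ\pi_n=\pi_{n+1}$ holds \emph{exactly}. There is then no approximate intertwining at all: the limit map $\pi$ into $B=\varinjlim(\mathbb M_{m_n},\iota_n)$ satisfies $\|\pi(a)\|=\|\pi_n(a)\|=\|\pi_1(a)\|=\|\sigma_1(a)\|$ for every $n$ and every $a$, which is the norm coming from a single fixed finite-dimensional representation. The information carried by the later $\sigma_{n+1}$ never enters $\pi$; it only appears in the side observation that $(N_n+1)\pi_n$ is unitarily close to $\sigma_{n+1}\oplus N_n\pi_n$, and that constrains nothing about the direct limit you actually form. (You also apply the hypothesis to $\pi_n$ and $\sigma_{n+1}$, which land in matrix algebras of different sizes; this is reparable by inflation, as the paper does, but needs to be said.)

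The paper's construction does the opposite. The new map at stage $n+1$ is taken to be the one that \emph{contains} the next TRFD representation as a summand, namely $\sigma_{n+1}(a):=(1_{B_n}\otimes\rho_{n+1}(a))\oplus k(n{+}1)N_n\,\sigma_n(a)$, so that $\|\sigma_{n+1}(a)\|\geq\|\rho_{n+1}(a)\|$ and the sequence $(\sigma_n)$ is genuinely asymptotically (completely) isometric. The unitary $u_n$ witnessing the absorption hypothesis is then used to define the \emph{connecting homomorphism} between the finite-dimensional triangular algebras, via $\pi_n(c):=u_n(c\otimes 1)u_n^*$, and the choice of $u_n$ gives exactly the summable approximate-intertwining estimate $\|\pi_n\sigma_n(a_j)-\sigma_{n+1}(a_j)\|<2^{-n}$ that feeds into Lemma~\ref{lemma1}. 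Your worry about conjugation destroying the standard upper-triangular form is handled in the paper simply by \emph{declaring} the next algebra to be $u_n(C_n\otimes\mathbb T_{\bullet})u_n^*$; this is by construction a unitary conjugate of a standard triangular algebra, the connecting maps carry diagonals to diagonals, and \cite[Theorem 2.6]{ppw} applies directly---no permutation correction is needed. Finally, Arveson extension plays no role here: every map in sight is already a c.c.\ homomorphism defined on all of $A$, so the last paragraph of your sketch can be dropped.
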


\begin{definition}\label{qt} An operator algebra $A$ is called quasitriangular (QT) if there is a net of completely contractive maps $\phi_i: A\to \mathbb M_{k_i}(\mathbb C)$ which are asymptotically isometric and asymptotically multiplicative, that is,  $\lim_i\|\phi_i(a)\|=\|a\|$ and $\lim_i\|\phi_i(ab)-\phi_i(a)\phi_i(b)\|=0$, for each $a,b\in A$. A collection of bounded operators $S\subseteq \mathbb B(H)$ is called quasitriangular (QT) if given $\varepsilon>0$ and finite subsets $\mathfrak F\subseteq S$ and $F\subseteq H$, there is a finite rank projection $P\in \mathbb B(H)$ with $\|PT-PTP\|<\varepsilon$ and $\|Pv-v\|<\varepsilon$, for each $T\in \mathfrak F$ and $v\in F$. Finally, a representation $\pi: A\to \mathbb B(H)$ of an operator algebra $A$ is called quasitriangular (QT) if $\pi(A)\subseteq \mathbb B(H)$  is  quasitriangular.
\end{definition}

A representation of an operator algebra $A$ is a completely contractive (c.c.) homomorphism $\pi: A\to \mathbb B(H)$, where $H$ is a Hilbert space. A faithful representation is assumed moreover to be completely isometric (c.i.). A representation $\pi: A\to \mathbb B(H)$ is called {\it essential} if $\pi(A)\cap \mathbb K(H)=\{0\}$.

The second main result of this paper gives a characterization of separable unital quasitriangular operator algebras in terms of their faithful representations. This is an extension of a result of Voiculescu which  asserts that a separable C*-algebra is QD iff it forms a QD family of operators in some concrete Hilbert space. Since the ``third isomorphism theorem'' fails in its most general form for operator algebras \cite{abr}, the QT version of this result for operator algebras satisfying an extra condition (here called {\it essentially embedded}, requiring that the quotient map onto the Calkin algebra induces a complete isometry on the given operator algebra; see the next section for details and examples).   

\begin{theorem}\label{qtr}
For a separable unital operator algebra $A$, consider the following assertions:

\ \ $(i)$ $A$ is QT,

\ \ $(ii)$ $A$ has a faithful QT representation on a separable Hilbert space,

\ \ $(iii)$ Every faithful unital essential representation of $A$ on a separable Hilbert space is QT.

Then $(iii)\Rightarrow (ii)\Rightarrow(i)$. If moreover $A$ is essentially embedded, then we also have $(i)\Rightarrow (iii)$.
\end{theorem}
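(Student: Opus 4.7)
The plan is to handle the three implications separately, with $(i)\Rightarrow(iii)$ being the substantive one. For $(iii)\Rightarrow(ii)$, the only content is to exhibit at least one faithful unital essential representation of $A$ on a separable Hilbert space. Starting from any faithful unital representation $\rho_0\colon A\to\mathbb B(H_0)$, the infinite amplification $\rho_0^{(\infty)}\colon A\to\mathbb B(H_0\otimes\ell^2)$ remains unital and completely isometric, and is essential because $\rho_0(a)\otimes I_{\ell^2}$ is compact only when $\rho_0(a)=0$. Applying $(iii)$ to this representation produces $(ii)$.

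For $(ii)\Rightarrow(i)$, let $\pi\colon A\to\mathbb B(H)$ be a faithful QT representation. Given $\varepsilon>0$ and a finite $\mathfrak F\subseteq A$, choose unit vectors $v_a\in H$ with $\|\pi(a)v_a\|\geq\|a\|-\varepsilon$, and apply the QT property of $\pi(A)$ to the operator set $\pi(\mathfrak F\cup\mathfrak F\cdot\mathfrak F)$ and the vector set $F=\{v_a,\pi(a)v_a:a\in\mathfrak F\}$ to obtain a finite-rank projection $P\in\mathbb B(H)$. Define the compression $\phi_P\colon A\to\mathbb B(PH)\cong\mathbb M_{\dim PH}(\mathbb C)$ by $\phi_P(a)=P\pi(a)P$. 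It is c.c., and the identity
\[\phi_P(ab)-\phi_P(a)\phi_P(b)=P\pi(a)(I-P)\pi(b)P\]
together with the QT estimate yields asymptotic multiplicativity; using $Pv_a\approx v_a$ and $P\pi(a)v_a\approx\pi(a)v_a$ one obtains $\|\phi_P(a)\|\geq\|a\|-(2+\|a\|)\varepsilon$, giving asymptotic isometry. Indexing by the directed set of pairs $(\varepsilon,\mathfrak F)$ produces the required QT net.

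For $(i)\Rightarrow(iii)$, suppose $A$ is essentially embedded and let $\pi\colon A\to\mathbb B(H)$ be a faithful unital essential representation. From the QT approximating sequence $\phi_n\colon A\to\mathbb M_{k_n}(\mathbb C)$ (sequential by separability of $A$), form $\sigma:=\bigoplus_n\phi_n^{(\infty)}\colon A\to\mathbb B(K)$ on a separable Hilbert space $K$. Asymptotic multiplicativity combined with infinite multiplicities makes the induced map $\bar\sigma\colon A\to\mathbb B(K)/\mathbb K(K)$ an isometric unital homomorphism into the Calkin algebra, while essential embeddedness makes $\bar\pi\colon A\to\mathbb B(H)/\mathbb K(H)$ a complete isometry. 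The strategy is to show that $\pi$ and $\pi\oplus\sigma$ are approximately unitarily equivalent; once this is established, QT-ness of $\pi\oplus\sigma$ is manifest from the block-diagonal finite-dimensional structure of $\sigma$ (with the $\pi$-direction absorbed via essentiality into the compacts above a large finite-rank cutoff), and conjugating the finite-rank QT projections for $\pi\oplus\sigma$ by the near-unitaries transports QT-ness back to $\pi$.

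The principal obstacle is establishing this Voiculescu-type absorption in the non-selfadjoint setting. My plan is to apply Arveson's extension theorem \cite{ar1} to extend $\bar\sigma$ and $\bar\pi$ from $A$ to u.c.p. maps on the operator system $A+A^*$ inside the $C^*$-envelope $C^*_e(A)$, and then to u.c.p. maps on all of $C^*_e(A)$; Stinespring-dilate these to honest $*$-representations on auxiliary Hilbert spaces; invoke the classical Voiculescu theorem for separable unital $C^*$-algebras at the envelope level, where essentiality of $\pi$ and essential embeddedness propagate to essentiality and complete isometry of the relevant Calkin-valued data; and finally compress the resulting approximate unitary equivalence back to $A$. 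The essentially embedded hypothesis is precisely what guarantees that the metric picture at the Calkin quotient faithfully reflects $A$, so that this final compression does not collapse—without it there is no reason the $C^*$-envelope-level approximate equivalence should descend to $A$, and this is where all the subtlety of the non-selfadjoint case concentrates.
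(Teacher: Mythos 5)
Your treatments of $(iii)\Rightarrow(ii)$ (infinite amplification to manufacture an essential faithful representation) and $(ii)\Rightarrow(i)$ (compression by the QT projections, with the vectors $v_a,\pi(a)v_a$ forced into the almost-invariant set to witness near-isometry) are correct and agree with the paper's proof; the second is in fact written more carefully than the paper's version. The problems are all in $(i)\Rightarrow(iii)$, where your architecture is recognizably the paper's (Arveson extension feeding a Voiculescu--Weyl--von~Neumann absorption, then transport of QT-ness along the resulting near-intertwiners) but three specific steps fail as stated. First, with $\sigma=\bigoplus_n\phi_n^{(\infty)}$ the multiplicativity defect $\sigma(ab)-\sigma(a)\sigma(b)=\bigoplus_n\bigl(\phi_n(ab)-\phi_n(a)\phi_n(b)\bigr)^{(\infty)}$ is \emph{not} compact: already one block $\varepsilon\otimes 1_{\ell^2}$ with $\varepsilon\neq 0$ finite rank fails to be compact, so $\bar\sigma$ is not a homomorphism into the Calkin algebra. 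You must drop the infinite multiplicities and take $\Phi=\bigoplus_n\phi_n$, whose defect is a $c_0$-sum of finite-rank blocks and hence compact; this is exactly what the paper does, and the infinite multiplicity belongs on the \emph{representation} side of Voiculescu's theorem, not on the c.p.-map side.

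Second, the $C^*$-envelope is the wrong vehicle for the absorption. Voiculescu's theorem needs one side to be an honest essential $*$-representation of a separable $C^*$-algebra on the Hilbert space in question; $\pi$ does not extend to a $*$-representation of $C^*_e(A)$ (only a u.c.p.\ map, and your Stinespring dilation of it lives on a different space with no control over faithfulness or essentiality). The paper warns about precisely this in Remark~\ref{bdry}$(ii)$: the extension of a faithful essential (boundary) representation need not remain faithful or essential, and $C^*_e(A)$ of a QT algebra need not be QD. The correct move (Lemmas~\ref{lemma4} and~\ref{lemma2}) is asymmetric: keep $\pi(A)\subseteq\mathbb B(H)$ concrete, pass to the separable $C^*$-algebra $B=C^*(\overline{A+A^*})\subseteq\mathbb B(H)$ where the inclusion \emph{is} a representation, and Arveson-extend only the other map $\Pi=q\circ\Phi$ --- which vanishes on $\mathbb K(H)$ because $A\cap\mathbb K(H)=0$ and $A$ is essentially embedded (this is exactly where Lemma~\ref{lemma4}, hence the essential-embeddedness hypothesis, is consumed: to factor a representation of $A$ through $(A+\mathbb K(H))/\mathbb K(H)$ before extending). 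Third, the claim that QT-ness of $\pi\oplus\sigma$ is ``manifest'' is circular: a finite-rank projection that nearly fixes a vector with nonzero $H$-component must have substantial range inside $H$, and near-commutation with $\pi(a)$ on that range is the very QT-ness of $\pi$ being proved; essentiality of $\pi$ does not let you discard the $H$-summand into the compacts. The transport has to run the other way (the paper's Lemma~\ref{qt2}): $\Phi(A)$ is block-diagonal and hence trivially a QT set of operators, the absorption produces coisometries $V_n$ from $H$ onto the target space with $\|\Pi(a)-V_n\pi(a)V_n^*\|\to 0$, and the block projections are pulled \emph{back} to $H$ as $V_n^*P V_n$. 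Even then a further argument with tail projections and amplifications (as in the paper) is needed to arrange that the pulled-back projections also nearly fix prescribed vectors of $H$.
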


The above theorem could be used to construct QT and non-QT operator algebras. It is shown by Halmos \cite{hal} that the unilateral shift $S$ on a separable Hilbert space is not QT (Douglas and Pearcy later constructed an operator $T$ on a separable Hilbert space so that neither $T$ nor $T^*$ is QT, answering a question of Halmos in negative \cite{dp}). Now the operator algebra generated by $S$ (which is the non selfadjoint Toeplitz algebra, here denoted by $\mathfrak T$) would not be QT by the above theorem. This has some consequences. For instance, if a separable unital operator algebra $A$ is QT, then $A$ could not contain a non surjective isometry (since otherwise it should contain a copy of non-QT algebra $\mathfrak T$), and neither does $A\otimes \mathbb M_n(\mathbb C)$, for each $n$. In other words, $A$ is {\it stably finite} (which simply means $\mathbb M_n(A)$ is {\it finite}--i.e., does not contain non unitary isometries--for each $n\geq 1$). We may get the same result for non-unital algebras by passing to the minimal unitization (which remains separable, if the original algebra is so). Hence we have the following result.

\begin{proposition}\label{fg}
An operator algebra is QT if and only if all of its finitely generated closed subalgebras
are QT.
\end{proposition}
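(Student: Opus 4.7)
The plan is to prove the two directions separately; the forward direction is essentially a restriction argument, while the reverse reduces to an extension problem for completely contractive maps.

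For $(\Rightarrow)$, given a net $\phi_i: A \to \mathbb{M}_{k_i}(\mathbb{C})$ witnessing that $A$ is QT, I would simply restrict to a closed subalgebra $B \subseteq A$: the maps $\phi_i|_B$ are still c.c.\ into matrix algebras, and the pointwise conditions $\lim_i\|\phi_i(a)\|=\|a\|$ and $\lim_i\|\phi_i(ab)-\phi_i(a)\phi_i(b)\|=0$ for $a,b\in B$ follow at once from the corresponding conditions on $A$. Thus every closed subalgebra of a QT algebra (in particular every finitely generated one) is QT.

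For $(\Leftarrow)$, I would first rephrase QT in its standard $\varepsilon$-finite-set form: $A$ is QT iff for every finite $F\subseteq A$ and every $\varepsilon>0$ there is a c.c.\ map $\phi: A\to\mathbb{M}_k(\mathbb{C})$ satisfying $\bigl|\|\phi(a)\|-\|a\|\bigr|<\varepsilon$ and $\|\phi(ab)-\phi(a)\phi(b)\|<\varepsilon$ for all $a,b\in F$; the net is then indexed by pairs $(F,\varepsilon)$. Given such $F$ and $\varepsilon$, let $B$ be the closed subalgebra of $A$ generated by $F$. Then $B$ is finitely generated and closed, hence QT by hypothesis, so I can select a c.c.\ map $\psi:B\to \mathbb{M}_k(\mathbb{C})$ satisfying the required bounds on $F\subseteq B$. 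I would then extend $\psi$ to a c.c.\ map $\tilde{\psi}:A\to \mathbb{M}_k(\mathbb{C})=\mathbb{B}(\mathbb{C}^k)$; since $\tilde{\psi}|_B=\psi$ and $F\subseteq B$, the bounds on $F$ carry over automatically, producing the desired approximating map on all of $A$.

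The only non-trivial step, and what I would flag as the main (though standard) obstacle, is the extension $\psi\rightsquigarrow\tilde{\psi}$. This is supplied by the Arveson--Wittstock (Paulsen) extension theorem: any completely contractive map from an operator subspace of $A$ into $\mathbb{B}(H)$ extends, with the same c.b.\ norm, to a c.c.\ map on all of $A$; here $H=\mathbb{C}^k$, so the extension lands in the same matrix algebra. No separability or unitality assumption enters, so the argument yields the equivalence for arbitrary operator algebras and in fact shows the stronger statement that \emph{every} closed subalgebra of a QT operator algebra is QT.
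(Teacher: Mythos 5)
Your proof is correct, and it takes a genuinely different route from the one in the paper. The paper handles necessity exactly as you do (restriction of the maps $\phi_i$), but for sufficiency it argues representation-theoretically: it fixes a faithful essential representation $\pi\colon A\to\mathbb B(H)$, observes that the restriction of $\pi$ to each finitely generated closed subalgebra $B$ is again faithful and essential, invokes Lemma~\ref{lemma5} (the non-separable upgrade of Theorem~\ref{qtr}) to conclude that $\pi(B)$ is a QT \emph{set of operators}, and then uses the fact that the set-of-operators condition is checked on finite sets to conclude that $\pi(A)$ is QT. You instead stay entirely at the level of Definition~\ref{qt}: you localize the net condition to finite sets, pass to the closed subalgebra $B$ generated by the finite set, and use the Arveson--Wittstock extension theorem (injectivity of $\mathbb M_k(\mathbb C)$) to push the approximating c.c.\ map back up to $A$; since the finite set and all relevant products lie in $B$, the estimates are preserved verbatim. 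Your argument is shorter and self-contained, makes no appeal to separability, unitality, or essential representations, and in particular does not depend on Theorem~\ref{qtr} at all, which is a real advantage since the paper's route passes through Lemma~\ref{lemma5} and hence through the harder implications of that theorem. What the paper's heavier route buys is the intermediate fact that the image of every faithful essential representation of a QT algebra is a QT set of operators even on non-separable Hilbert spaces, which is what is actually used later (e.g.\ for Corollary~\ref{sf}). Note that the paper itself remarks, immediately after the proposition, that the proposition together with the Arveson extension theorem removes the separability hypothesis from Theorem~\ref{qtr}; your proof simply front-loads that extension-theorem idea into the proposition itself, reversing the logical dependence.
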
 

It immediately follows from the above proposition and Arveson Extension Theorem that the separability condition could be removed from Theorem \ref{qtr}. This allows one to repeat the above argument for the lack of non surjective isometries in the non separable case, leading to  the next result.

\begin{corollary}\label{sf}
A QT operator algebra $A$ is stably finite.
\end{corollary}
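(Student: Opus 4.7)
My strategy is to argue by contradiction, reducing stable finiteness of $A$ to the already-established non-quasitriangularity of the non-selfadjoint Toeplitz algebra $\mathfrak T$ immediately after Theorem~\ref{qtr}. The argument for the separable unital case is in fact sketched in the paragraph preceding Proposition~\ref{fg}; what remains is to record two permanence properties of the class of QT algebras and to observe that, thanks to Proposition~\ref{fg} and Arveson extension, the separability assumption in Theorem~\ref{qtr} is not needed.

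First I would verify two routine stability properties. (1) \emph{Amplification}: given completely contractive maps $\phi_i:A\to \mathbb M_{k_i}(\mathbb C)$ that are asymptotically isometric and asymptotically multiplicative, the amplifications $\mathrm{id}_n\otimes \phi_i:\mathbb M_n(A)\to \mathbb M_{nk_i}(\mathbb C)$ inherit all three properties from the definition of the cb-norm, so $\mathbb M_n(A)$ is QT for every $n\ge 1$. (2) \emph{Subalgebras}: if $B\subseteq A$ is a closed subalgebra, then the restrictions $\phi_i\big|_B$ witness that $B$ is QT. For a non-unital $A$, one passes to the unitization $A^+$, which is again QT (the $\phi_i$ extend unitally).

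Now suppose for contradiction that $A$ is QT but that some $\mathbb M_n(A^+)$ contains a non-unitary isometry $v$. Let $B$ denote the closed unital subalgebra of $\mathbb M_n(A^+)$ generated by $v$. By Wold's decomposition, $v$ splits as a direct sum of a non-zero pure shift (of some multiplicity) and a unitary, and a standard polynomial-norm computation—the unitary summand contributes a spectrum contained in $\mathbb T$, whose polynomial norms are dominated by the shift's $H^\infty$-type norm—identifies the closed operator algebra generated by such a $v$ with $\mathfrak T$ completely isometrically. By (1), $\mathbb M_n(A^+)$ is QT; by (2) the subalgebra $B\cong \mathfrak T$ would then be QT, contradicting the fact, recorded after Theorem~\ref{qtr}, that $\mathfrak T$ is not QT.

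The only non-formal step in the argument is the identification $\overline{\mathrm{alg}}(v)\cong \mathfrak T$ for an arbitrary non-unitary isometry $v$; this is the main place where one must do work rather than invoke definitions, and it is what licenses the jump from ``$\mathbb M_n(A)$ contains some non-unitary isometry'' to ``$\mathbb M_n(A)$ contains the specific non-QT algebra $\mathfrak T$.''
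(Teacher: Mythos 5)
Your proposal is correct and takes essentially the same route as the paper: the paper proves the corollary by exactly this reduction, using that QT passes to closed subalgebras and matrix amplifications, that a non-unitary isometry generates a completely isometric copy of the non-QT algebra $\mathfrak T$, and that Proposition~\ref{fg} together with the Arveson extension theorem removes the separability hypothesis from Theorem~\ref{qtr} (with the non-unital case handled by unitization). The only difference is that you make explicit the Wold-decomposition step identifying $\overline{\mathrm{alg}}(v,1)$ with $\mathfrak T$, which the paper leaves implicit.
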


The above corollary could be employed to construct more examples of non-QT operator algebras. It is known that in the Cuntz $C^*$-algebra $\mathcal O_n$ we have an abundance of analytic subalgebras \cite{hp}. In particular, it contains a canonical UHF
subalgebra (such that each strongly maximal triangular subalgebra of the
UHF subalgebra has an extension to a strongly maximal triangular
subalgebra of $\mathcal O_n$). This shows that there are lots of triangular subalgebras in $\mathcal O_n$. One concrete example of such a subalgebra is the Volterra algebra (already studied by Power \cite{p1}), which is maximal triangular, but not strongly maximal triangular \cite[pages 29 \& 32]{hp}. However, this operator algebra is not QT, since it is generated by the Cuntz partial isometries it contains (and so is not finite). 

On the other hand, if the weight sequence of a bilateral weighted shift on a separable Hilbert space has 0 as a limit point (in both directions) then it is quasidiagonal \cite[Theorem 1]{s}. By almost the same proof as in \cite{s}, one could check that if the weight sequence of a unilateral weighted forward shift has 0 as a limit point then it is quasitriangular (take a subsequence of weights going to 0 and project on the the subspace generated by the basis elements with the same indices as the subsequence). This way we could construct a separable (singly generated) QT operator algebra (which is not QD if we choose the weights not to go to zero when indices are negated). 

Our next result asserts that that every QT operator algebra can be locally approximated by a residually finite dimensional (RFD) operator algebra. The proof is an
 adaptation of the Halmos original proof that every quasitriangular operator
can be written as a block triangular operator plus a compact (see also, \cite{he}). We call an operator algebra $A$  {\it block triangular} if there exists an increasing sequence of finite
rank projections $P_n$ going SOT to 1 such that $P_na=P_naP_n$, for all $a\in A$ and $n\geq 1$. As for $C^*$-algebras, it is easy to see that an operator algebra  is RFD if and only if it has a faithful representation with block triangular image (c.f. \cite{cm}, \cite{cr} for alternative approaches). 
Note that an operator algebra might be RFD while its enveloping C*-algebra is not so \cite[Example 3.4]{cm} and \cite[Example 1]{th}. 

If $\varepsilon>0$ and $F, A\subseteq \mathbb B(H)$ are sets of operators then  $F$ is $\varepsilon$-contained in $A$ if for each $x \in F$ there exists $y \in A$ with $\|x - y\| < \varepsilon$.
When this is the case we write $F\subseteq_{\varepsilon} B$.

\begin{theorem}\label{rfd}
A separable  operator algebra $A\subseteq \mathbb B(H)$ is QT (as a set of operators)  iff for every finite set $F\subseteq  A$ and every $\varepsilon > 0$,  there exists a
block triangular algebra $B \subseteq  \mathbb \mathbb B(H)$ such that $ F \subseteq_{\varepsilon} B$ and
$A + \mathbb K(H) = B + \mathbb K(H)$. If moreover  $A$ is  essentially embedded and $A\cap \mathbb K(H)=0$, then it is a quotient of $B$. 
\end{theorem}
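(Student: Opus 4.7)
My plan is to prove the two directions separately, adapting Halmos's block-triangularization theorem for the forward direction. For the reverse direction, given finite $\mathfrak F \subseteq A$, finite $F \subseteq H$, and $\varepsilon > 0$, I produce via the hypothesis a block triangular algebra $B$ with $\mathfrak F \subseteq_{\varepsilon/3} B$ and let $(P_n)$ witness its block triangularity. Pick $T' \in B$ within $\varepsilon/3$ of each $T \in \mathfrak F$, and choose $n$ so large that $\|P_n v - v\| < \varepsilon$ for every $v \in F$; since $P_n T' = P_n T' P_n$, a triangle inequality on $\|P_n T - P_n T P_n\|$ against $T'$ yields a bound of $2\|T-T'\| < \varepsilon$, so $P_n$ witnesses QT for this data.

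For the forward direction, fix $F = \{a_1,\ldots,a_m\}$ and $\varepsilon > 0$, enlarge $F$ to a countable dense sequence $(a_k)_{k \geq 1}$ in $A$, and apply QT inductively to build finite rank projections $0 = P_0 \leq P_1 \leq P_2 \leq \cdots$ converging strongly to $I$ with $\|P_n a_k - P_n a_k P_n\| < 2^{-n-1}\varepsilon$ for all $k \leq \max(m,n)$. Setting $Q_n := P_n - P_{n-1}$, I mimic Halmos and define
\[
\alpha(a_k) := \sum_{n \geq 1} Q_n a_k (I - P_n),
\]
a series of finite rank operators. The identity $(I - P_{n-1})(I - P_n) = I - P_n$ yields the key estimate
\[
\|Q_n a_k (I - P_n)\| \leq \|P_n a_k (I - P_n)\| + \|P_{n-1} a_k (I - P_{n-1})\|,
\]
which is geometrically summable, so $\alpha(a_k)$ converges in norm to a compact operator with $\|\alpha(a_k)\| < \varepsilon$ whenever $k \leq m$. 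Using $\sum_{n \leq m} Q_n = P_m$, a direct computation shows that $\beta(a_k) := a_k - \alpha(a_k)$ satisfies $P_m \beta(a_k)(I - P_m) = 0$ for every $m$, so $\beta(a_k)$ is block lower triangular with respect to $(P_n)$.

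I then take $B$ to be the norm closure of the subalgebra of $\mathbb B(H)$ generated by $\{\beta(a_k): k \geq 1\}$. The block triangular operators with respect to a fixed $(P_n)$ form a closed subalgebra, so $B$ is block triangular, and $\|a_k - \beta(a_k)\| < \varepsilon$ for $k \leq m$ gives $F \subseteq_\varepsilon B$. Since each $\beta(a_k) = a_k - \alpha(a_k) \in A + \mathbb K(H)$ and $A + \mathbb K(H)$ is a closed subalgebra of $\mathbb B(H)$, one obtains $B \subseteq A + \mathbb K(H)$, hence $B + \mathbb K(H) \subseteq A + \mathbb K(H)$; conversely each $a_k \in B + \mathbb K(H)$ via its compact residue $\alpha(a_k)$, and a density argument (pairing each $a \in A$ with a fast-converging subsequence of $(a_k)$) extends this to $A \subseteq B + \mathbb K(H)$. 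Under the additional hypotheses that $A$ is essentially embedded and $A \cap \mathbb K(H) = 0$, the Calkin projection $\pi$ restricts to a completely isometric homomorphism on $A$, and $\pi(B) = \pi(B + \mathbb K(H)) = \pi(A + \mathbb K(H)) = \pi(A) \cong A$, so the induced surjection $B \twoheadrightarrow \pi(B) = \pi(A) \cong A$ exhibits $A$ as a quotient of $B$.

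The main obstacle is arranging the Halmos telescoping uniformly over the growing dense subset $(a_k)$: the rapid decay $\varepsilon_n = 2^{-n-1}\varepsilon$ is dictated by the simultaneous need to force absolute summability of $\alpha(a_k)$ as a series of finite rank operators and to keep $\|\alpha(a_k)\| < \varepsilon$ on $F$. Once this calibration and the closedness of $A + \mathbb K(H)$ in $\mathbb B(H)$ are in hand, the remaining algebraic checks (stability of block triangularity under products, the ideal property of $\mathbb K(H)$) and the Calkin-level identification of $A$ with a quotient of $B$ are essentially formal.
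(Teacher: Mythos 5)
Your forward-direction construction is sound but genuinely diverges from the paper at the key step: the paper forms the block-diagonal compression $\delta(a)=\sum_n E_naE_n$ (following Arveson's quasidiagonal argument), whereas you subtract only the upper corners, $\alpha(a)=\sum_n Q_na(1-P_n)$, and keep the block lower-triangular truncation $\beta(a)=a-\alpha(a)$. Your choice is the natural one-sided analogue of Halmos: quasitriangularity controls only $\|P_na(1-P_n)\|$, and that is exactly what your telescoping estimate $\|Q_na(1-P_n)\|\leq\|P_na(1-P_n)\|+\|P_{n-1}a(1-P_{n-1})\|$ exploits to make $\alpha(a_k)$ an absolutely convergent series of finite-rank operators, hence compact and small on $F$. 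The identity $P_m\beta(a_k)(1-P_m)=0$ checks out, the backward implication is correct, and the Calkin-level argument for the quotient statement is essentially the paper's (you should add that $B$, being block triangular, is essentially embedded, so that $q(B)\cong B/(B\cap\mathbb K(H))$ isometrically and ``quotient'' carries its intended meaning; a bare surjective homomorphism is not quite enough).

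The genuine gap is in the set equality $A+\mathbb K(H)=B+\mathbb K(H)$. You assert that $A+\mathbb K(H)$ is a closed subalgebra of $\mathbb B(H)$; for non-selfadjoint operator algebras this is false in general---this is precisely the ``essentially closed'' phenomenon the paper dwells on (Example~\ref{ex}$(iv)$, the Davidson--Fong commutative weakly closed algebra with $A+\mathbb K(H)$ not closed). Since your $B$ is a norm closure, without that closedness you only obtain $B\subseteq\overline{A+\mathbb K(H)}$, not $B\subseteq A+\mathbb K(H)$. Symmetrically, the inclusion $A\subseteq B+\mathbb K(H)$ is verified only on the dense sequence $(a_k)$, and the ``density argument'' cannot lean on continuity of $\beta$: block-triangular truncation is not a bounded map on $\mathbb B(H)$, and your $\alpha,\beta$ are defined only on the $a_k$. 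The fast-subsequence idea can be pushed through (choose $k_j$ so that $\max(k_j,k_{j+1})\,\|a_{k_{j+1}}-a_{k_j}\|$ is summable; then $(\beta(a_{k_j}))_j$ is norm-Cauchy in $B$ and $a-\lim_j\beta(a_{k_j})$ is a norm limit of compacts), but as written it is a placeholder rather than a proof, and it does not resolve the first inclusion. Two calibration points: with $2^{-n-1}\varepsilon$ your bound on $\|\alpha(a_k)\|$ sums to $\varepsilon$ rather than strictly less (use $2^{-n-2}\varepsilon$), and the existence of an \emph{increasing} sequence $(P_n)$ with $P_n\uparrow 1$ requires the standard perturbation argument the paper invokes in Lemma~\ref{lemma3}.
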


A {\it state} $\tau$ on a unital operator algebra $A$ is a linear functional satisfying $\tau(1)=\|\tau\|=1$. It is called a  {\it tracial state} if moreover, $\tau(ab)=\tau(ba)$, for each $a,b\in A$. We have the following extension of a classical result of Voiculescu \cite[2.4]{v2}.

\begin{proposition}\label{tr}
Every unital QT operator algebra  has a 
tracial state.
\end{proposition}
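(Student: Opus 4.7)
The plan is to imitate the classical argument of Voiculescu for quasidiagonal $C^*$-algebras: pull back the normalized traces on the matrix algebras through the defining net of Definition \ref{qt} and extract a weak-$*$ cluster point on $A$.

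Let $\phi_i:A\to\mathbb M_{k_i}(\mathbb C)$ be a net of completely contractive, asymptotically isometric, asymptotically multiplicative maps witnessing the quasitriangularity of $A$. A preliminary reduction addresses the fact that the $\phi_i$ are not required to be (even approximately) unital, which is what would make the normalized trace pulled back through $\phi_i$ fail to be a state. Since asymptotic multiplicativity gives $\phi_i(1)^2-\phi_i(1)\to 0$ while asymptotic isometry gives $\|\phi_i(1)\|\to 1$, and a norm-one idempotent in a finite-dimensional Hilbert space is an orthogonal projection, I would produce self-adjoint projections $p_i\in\mathbb M_{k_i}(\mathbb C)$ with $\|\phi_i(1)-p_i\|\to 0$ by a standard spectral/functional-calculus perturbation argument. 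Setting $r_i:=\mathrm{rank}(p_i)$ and $\psi_i:A\to p_i\mathbb M_{k_i}(\mathbb C) p_i\cong\mathbb M_{r_i}(\mathbb C)$ by $\psi_i(a):=p_i\phi_i(a)p_i$, writing $\phi_i(a)=\phi_i(1\cdot a\cdot 1)$ and combining asymptotic multiplicativity with $p_i\approx\phi_i(1)$ shows $\|\phi_i(a)-p_i\phi_i(a)p_i\|\to 0$; hence $\psi_i$ inherits complete contractivity, asymptotic isometry, and asymptotic multiplicativity, and now additionally $\|\psi_i(1)-1_{r_i}\|\to 0$.

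With this in hand, set $\tau_i:=\mathrm{tr}_{r_i}\circ\psi_i$, where $\mathrm{tr}_{r_i}$ denotes the normalized trace on $\mathbb M_{r_i}(\mathbb C)$. Then $|\tau_i(a)|\leq\|\psi_i(a)\|\leq\|a\|$, so each $\tau_i$ lies in the closed unit ball of $A^*$, and $\tau_i(1)\to 1$. Furthermore, the trace property of $\mathrm{tr}_{r_i}$ combined with asymptotic multiplicativity yields
\[
|\tau_i(ab)-\tau_i(ba)|\leq \|\psi_i(ab)-\psi_i(a)\psi_i(b)\|+\|\psi_i(b)\psi_i(a)-\psi_i(ba)\|\longrightarrow 0.
\]
By Banach--Alaoglu, the net $(\tau_i)$ has a weak-$*$ cluster point $\tau\in A^*$, which necessarily satisfies $\|\tau\|\leq 1$, $\tau(1)=1$, and $\tau(ab)=\tau(ba)$ for all $a,b\in A$. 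In particular $\|\tau\|=1$ and $\tau$ is the desired tracial state on $A$.

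The one genuinely delicate step is the opening reduction: producing the projections $p_i$ near $\phi_i(1)$, which rests on the rigidity of norm-one idempotents in finite dimensional Hilbert spaces together with a perturbation estimate for nearly-idempotent matrices. Once the $\psi_i$ are constructed, the remaining argument is a verbatim adaptation of Voiculescu's original pull-back construction.
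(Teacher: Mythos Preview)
Your proposal is correct and follows essentially the same Voiculescu pullback argument as the paper: compose the approximating maps with normalized matrix traces and take a weak-$*$ cluster point. The paper's version is terser, appealing to ``the proof of Theorem~\ref{qtr}'' to assert that the $\phi_i$ may be taken unital; your explicit perturbation of $\phi_i(1)$ to a projection and subsequent compression supplies exactly the justification that the paper leaves implicit (and which is genuinely needed, since Definition~\ref{qt} does not require the maps to be unital and the proposition does not assume $A$ is essentially embedded).
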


\section{Proofs}

In this section we gives proofs of the results of the previous sections. Because the notion of positivity is not available for non selfadjoint operator algebras, in some cases we have to deviate from the line of the proof of the analogous result for the selfadjoint case, or allude to the basic idea of Arveson to get c.p. extensions of c.c. maps \cite{ar1}.

In the first lemma we deal with direct limit of operator algebras. The connecting maps are homomorphism, and unlike $*$-homomorphisms these are not guaranteed to be contractive (even continuous). Therefore, we only work with c.c. homomorphisms.

\begin{lemma}\label{lemma1}
Let $A$ be a separable unital operator algebra and $H$ be a separable Hilbert space. If there is an increasing sequence of finite dimensional triangular subalgebras $B_n\subseteq \mathbb B(H)$, constant non-negative integer $k\geq 1$, injective c.c. homomorphisms $\pi_n: B_n\to B_{n+k}$, and u.c.c. maps $\sigma_n: A\to B_{n}$ such that

$(i)$ the family $\{\sigma_n\}$ is asymptotically completely isometric and asymptotically multiplicative,

$(ii)$ $\sum_{n=1}^{\infty} \|\pi_n\circ\sigma_n(a)-\sigma_{n+k}(a)\|<\infty,$ for some subset $G\subseteq A$ with norm dense linear span and each $a\in G$, where the norm in the summand is calculated in $\mathbb B(H)$,

then $A$ is TAF-embeddable.
\end{lemma}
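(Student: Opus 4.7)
The plan is to form a Banach-algebra inductive limit $\mathcal{C}$ of the $B_n$'s along the connecting system $(\pi_n)$, glue the $\sigma_n$'s into a completely isometric homomorphism $\Phi\colon A\to\mathcal{C}$, and then exhibit $\mathcal{C}$ as a TAF algebra. Passing to the subsystem $B_1\xrightarrow{\pi_1}B_{1+k}\xrightarrow{\pi_{1+k}}B_{1+2k}\to\cdots$, set $C_j\defeq B_{1+jk}$ and $\psi_j\defeq\pi_{1+jk}$. Since each $\psi_j$ is an injective c.c.\ homomorphism between finite-dimensional Banach algebras, the Banach-algebra inductive limit $\mathcal{C}\defeq\varinjlim(C_j,\psi_j)$ exists, with canonical c.c.\ structure maps $\eta_j\colon C_j\to\mathcal{C}$ satisfying $\eta_{j+1}\circ\psi_j=\eta_j$ and $\|\eta_j(x)\|_\mathcal{C}=\lim_{m\to\infty}\|\psi_{m-1}\circ\cdots\circ\psi_j(x)\|_{C_m}$.

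Next I would construct $\Phi$. For $a\in G$, the estimate
\[
\|\eta_{j+1}(\sigma_{1+(j+1)k}(a))-\eta_j(\sigma_{1+jk}(a))\|_\mathcal{C}\leq\|\sigma_{1+(j+1)k}(a)-\psi_j(\sigma_{1+jk}(a))\|
\]
together with hypothesis (ii) show that $(\eta_j(\sigma_{1+jk}(a)))_j$ is Cauchy in $\mathcal{C}$; put $\Phi(a)\defeq\lim_j\eta_j(\sigma_{1+jk}(a))$. Extend linearly to the dense span of $G$ and then by continuity (each $\eta_j\circ\sigma_{1+jk}$ is c.c., so $\|\Phi(a)\|\leq\|a\|$) to a c.c.\ map on $A$. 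Asymptotic multiplicativity of the $\sigma_n$ from (i) passes to the limit, upgrading $\Phi$ to a homomorphism.

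For complete isometry, an induction on $m-j$ using c.c.-ness of each $\psi_l$ yields the telescoping bound
\[
\|\psi_{m-1}\circ\cdots\circ\psi_j(\sigma_{1+jk}(a))-\sigma_{1+mk}(a)\|\leq\sum_{l=j}^{m-1}\|\psi_l(\sigma_{1+lk}(a))-\sigma_{1+(l+1)k}(a)\|\leq\delta_j,
\]
where $\delta_j\to 0$ by (ii). Combined with $\|\sigma_{1+mk}(a)\|\to\|a\|$ from (i) and the formula for $\|\eta_j(\cdot)\|_\mathcal{C}$ above, this forces $\|\eta_j(\sigma_{1+jk}(a))\|_\mathcal{C}\to\|a\|$, whence $\|\Phi(a)\|=\|a\|$ on $G$ and, by density, on all of $A$. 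The same argument applied to the matrix amplifications $\sigma_n^{(m)}$ (the hypotheses persist with dense set $M_m(G)\subseteq M_m(A)$) upgrades this to complete isometry.

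The main obstacle, and the real content of the lemma, is to exhibit $\mathcal{C}$ as a TAF algebra. Each $C_j$ sits in a finite-dimensional C*-algebra $\widetilde{C}_j\defeq C_j+C_j^*\subseteq\mathbb{B}(H)$ with $D_j=C_j\cap C_j^*$ a masa in $\widetilde{C}_j$. The sensitive point is that $\psi_j$ is not assumed $*$-preserving; I would argue that a c.c.\ injective homomorphism between finite-dimensional triangular algebras is necessarily a block embedding up to conjugation by a diagonal unitary (the only c.c.\ way to rescale off-diagonal entries while preserving products in a nest algebra), which forces $\psi_j(D_j)\subseteq D_{j+1}$ and produces a canonical $*$-extension $\widetilde{\psi}_j\colon\widetilde{C}_j\to\widetilde{C}_{j+1}$ with $\widetilde{\psi}_j|_{C_j}=\psi_j$. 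The AF C*-algebra $\widetilde{\mathcal{C}}\defeq\varinjlim(\widetilde{C}_j,\widetilde{\psi}_j)$ then contains $\mathcal{C}$ as a triangular subalgebra whose diagonal $\varinjlim D_j$ is a masa, and the Str\u{a}til\u{a}--Voiculescu condition is inherited from the filtration $(D_j)\subseteq(\widetilde{C}_j)$. Hence $\mathcal{C}$ is TAF, and $\Phi\colon A\to\mathcal{C}$ is the desired TAF-embedding.
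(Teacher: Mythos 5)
Your construction of the inductive limit $\mathcal{C}=\varinjlim(C_j,\psi_j)$, the Cauchy/telescoping argument producing $\Phi$ on the dense span of $G$, and the upgrade to a complete isometry via amplifications all track the paper's proof (which rearranges indices to reduce to $k=1$ and then runs the same density argument into the direct limit $B$ of $\{(B_n,\pi_n)\}$); your telescoping estimate is in fact more careful than the paper's one-line assertion that (i) makes $\sigma$ completely isometric.

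The gap is in your final step. The claim that an injective c.c.\ homomorphism between finite-dimensional triangular algebras ``is necessarily a block embedding up to conjugation by a diagonal unitary,'' and hence admits a canonical $*$-extension $\widetilde{\psi}_j$, is false. For $0<\lambda<1$ the map
\[
\psi_\lambda:\mathbb T_2\to\mathbb T_2,\qquad \begin{pmatrix} a& b\\ 0& c\end{pmatrix}\mapsto \begin{pmatrix} a& \lambda b\\ 0& c\end{pmatrix}
\]
is a unital injective homomorphism, and it is completely contractive because it is the restriction of the Schur multiplier by the positive matrix $\bigl(\begin{smallmatrix}1&\lambda\\ \lambda&1\end{smallmatrix}\bigr)$, which is unital completely positive; yet it is not isometric on $e_{12}$, so it cannot be implemented by a diagonal unitary and admits no $*$-homomorphic extension to $M_2=C^*(\mathbb T_2)$ (a nonzero $*$-homomorphism $M_2\to M_2$ is isometric). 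Analogous multiplicative Schur multipliers $x_{ij}\mapsto 2^{i-j}x_{ij}$ exist on every $\mathbb T_n$. So your parenthetical justification is exactly what fails, and the AF algebra $\widetilde{\mathcal C}=\varinjlim(\widetilde C_j,\widetilde\psi_j)$ you want to wrap around $\mathcal C$ need not exist for the given connecting maps. The paper sidesteps this by quoting Peters--Poon--Wagner \cite[Theorem 2.6]{ppw} for the direct limit of an increasing system of triangular algebras with diagonals; that theorem presupposes connecting maps compatible with the ambient finite-dimensional $C^*$-algebras, which holds in the paper's application (there $\pi_n(c)=u_n(c\otimes 1)u_n^*$ is the restriction of a $*$-monomorphism of full matrix algebras) but is not a consequence of the bare hypothesis that the $\pi_n$ are injective c.c.\ homomorphisms. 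To close your argument you would need to add (or extract from the intended application) the hypothesis that each $\pi_n$ extends to a $*$-monomorphism of the generated finite-dimensional $C^*$-algebras carrying $D_n$ into $D_{n+k}$.
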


\begin{proof}
By a rearrangement of indices we may assume that $k=1$. Identify $B_n$ with a subalgebra of the direct limit $B$ of the system $\{(B_n, \pi_n)\}$ and regard each $\sigma_n$ as a map into $B$. Under this identification, one could rewrite $(ii)$ as $\sum_{n=1}^{\infty} \|\sigma_n(a)-\sigma_{n+1}(a)\|<\infty,$ which means that $\{\sigma_n(a)\}$ is Cauchy in $B$, for each $a\in G$. This shows that $\sigma: span(G)\to B;$ $\sigma(x):=\lim_{n}\sigma_n(x)$ is well defined. By the first assumption in $(i)$, the linear map $\sigma$ is completely isometric, and so has a complete isometric extension to $\bar\sigma: A\to B$, which is also a homomorphism, by the second assumption in $(i)$. Finally, observe that $B$ is TAF by \cite[Theorem 2.6]{ppw}.
\end{proof}

The proof of the first main result of this paper is a careful adaptation of an argument of M. Dadarlat \cite{d2} to the triangular setting.

\begin{proof}[Proof of Theorem~\ref{tafe}] Choose a asymptotically isometric sequence (by separability) of finite dimensional c.c. representations (by TRFD assumption) $\rho_i: A\to \mathbb M_{k(i)}(\mathbb C)$ such that
$$A_i:={\rm Im}(\rho_i)=\mathbb T_{k_1(i)}\oplus\cdots\oplus\mathbb T_{k_\ell(i)}\subseteq \mathbb M_{k(i)}(\mathbb C),$$
with $k_1(i)+\cdots k_\ell(i)=k(i)$, where $\ell$ depends on $i$. Put $B_1=A_1$ and $\sigma_1=\rho_1: A\to B_1\subseteq  \mathbb M_{k(1)}(\mathbb C)$. For finite dimensional representations
$$\sigma_1\otimes 1_{k(2)}=k(2)\sigma_1: A\to B_1\otimes\mathbb T_{k(2)}\subseteq \mathbb M_{k(1)k(2)}(\mathbb C),$$
and
$$1_{B_1}\otimes\rho_2: A\to B_1\otimes A_2\subseteq \mathbb M_{k(1)k(2)}(\mathbb C),$$
choose positive integer $N_1\geq 1$ such that
$$k(2)(N_1+1)\sigma_1: A\to B_1\otimes\mathbb T_{k(2)(N_1+1)}\subseteq \mathbb M_{n(1)}(\mathbb C),$$
and
$$(1_{B_1}\otimes\rho_2)\oplus k(2)N_1\sigma_1: A\to (B_1\otimes A_2)\oplus (B_1\otimes \mathbb T_{k(2)N_1})\subseteq \mathbb M_{n(1)}(\mathbb C),$$
are approximately unitarily equivalent, where $n(1):=k(1)k(2)(N_1+1)$.

Choose a countable dense subset $\{a_i\}$ in $A$ and for $a_1$, choose a unitary $u_1\in \mathbb M_{n(1)}(\mathbb C)$ with
$$\|u_1k(2)(N_1+1)\sigma_1(a_1)u_1^*-(1_{B_1}\otimes\rho_2(a_1))\oplus k(2)N_1\sigma_1(a_1)\|<\frac{1}{2}.$$
Put $C_1:=B_1$, $B_2:=u_1(C_1\otimes \mathbb T_{k(2)N_1})u_1^*\subseteq \mathbb M_{n(1)}(\mathbb C)$, and define $\pi_1: C_1\to B_2$ by $\pi_1(c):=u_1(c\otimes 1_{k(2)(N_1+1)})u_1^*$. Put $C_2:=(B_1\otimes A_2)\oplus (C_1\otimes \mathbb T_{k(2)N_1})\subseteq \mathbb M_{n(1)}(\mathbb C)$ and define $\sigma_2: A\to C_2$ by $\sigma_2(a):=(1_{B_1}\otimes \rho_2(a))\oplus k(2)N_1\sigma_1(a)$, then we can rewrite the last inequality as $\|\pi_1\sigma_1(a_1)-\sigma_2(a_1)\|<\frac{1}{2}.$ Next, for finite dimensional representations
$$\sigma_2\otimes 1_{k(3)}=k(3)\sigma_2: A\to C_2\otimes\mathbb T_{k(3)}\subseteq \mathbb M_{k(1)k(2)k(3)(N_1+1)}(\mathbb C),$$
and
$$1_{B_2}\otimes\rho_3: A\to B_2\otimes A_3\subseteq \mathbb M_{k(1)k(2)k(3)(N_1+1)}(\mathbb C),$$
choose positive integer $N_2\geq 1$ such that
$$k(3)(N_2+1)\sigma_2: A\to C_2\otimes\mathbb T_{k(3)(N_2+1)}\subseteq \mathbb M_{n(2)}(\mathbb C),$$
and
$$(1_{B_2}\otimes\rho_3)\oplus k(3)N_2\sigma_2: A\to (B_2\otimes A_3)\oplus (C_2\otimes \mathbb T_{k(3)N_2})\subseteq \mathbb M_{n(2)}(\mathbb C),$$
are approximately unitarily equivalent, where $n(2):=k(1)k(2)k(3)(N_1+1)(N_2+1)$. Choose a unitary $u_2\in \mathbb M_{n(2)}(\mathbb C)$ with
$$\|u_2k(3)(N_2+1)\sigma_2(a_j)u_2^*-(1_{B_2}\otimes\rho_3(a_j))\oplus k(3)N_2\sigma_2(a_j)\|<\frac{1}{4},$$
for $j=1,2$. Put $B_3:=u_2(C_2\otimes \mathbb T_{k(3)N_2})u_2^*\subseteq \mathbb M_{n(2)}(\mathbb C)$, and define $\pi_2: C_2\to B_3$ by $\pi_2(c):=u_2(c\otimes 1_{k(3)(N_2+1)})u_2^*$. Put $C_3:=(B_2\otimes A_3)\oplus (C_2\otimes \mathbb T_{k(3)N_2})\subseteq \mathbb M_{n(2)}(\mathbb C)$ and define $\sigma_3: A\to C_3$ by $\sigma_3(a):=(1_{B_2}\otimes \rho_3(a))\oplus k(3)N_2\sigma_2(a)$, then we can rewrite the last inequality as $\|\pi_2\sigma_2(a_j)-\sigma_3(a_j)\|<\frac{1}{4},$ for $j=1,2$.

Since all the algebras involved are unital, we may identify both $B_n$ and $C_n$ with a subalgebra of $C_{n+1}$. Continuing this way, we get finite dimensional triangular algebras $C_n$, injective homomorphisms $\pi_n: C_n\to C_{n+2}$ and u.c.c. maps $\sigma_n: A\to C_n$, satisfying the conditions of the previous lemma (with $k=2$). Therefore, $A$ is TAF-embeddable.
\end{proof}

\begin{remark}
$(i)$ If the condition of approximate unitary equivalence in Proposition \ref{qrfd} is replaced with genuine
unitary equivalence, the statement becomes trivial (at least for the case of C*-algebras, c.f., \cite[Exercice 8.1.1]{bo}).

$(ii)$ Let $\mathfrak F\subseteq A$ be a finite subset and $\varepsilon>0$. A finite dimensional representation $\sigma: A\to \mathbb M_k(\mathbb C)$ is $(\mathfrak F,\varepsilon)$-{\it admissible} if there is a faithful essential representation $\pi: A\to \mathbb B(H)$ on a separable Hilbert space $H$, and a unitary $u: \oplus_1^\infty \ell^2_k\to H$, such that $\|u\pi(a)u^*-\sigma_\infty(a)\|<\varepsilon,\ (a\in\mathfrak F)$, writing $\pi\approx_{(\mathfrak F,\varepsilon)} \sigma_\infty$.  Let us observe that in the proof of Proposition \ref{bdry}, in each step we need to know that the finite dimensional representations involved are homotopic and the one added with large multiplicity is $(\mathfrak F,\varepsilon)$-admissible, for a suitable choice of $(\mathfrak F,\varepsilon)$. Indeed, for two homotopic representations $\sigma_1$ and $\sigma_2$ with $\sigma_1$ being $(\mathfrak F,\varepsilon)$-admissible, there in $N\geq 1$ with $(N+1)\sigma_1\approx_{(\mathfrak F,3\varepsilon)}\sigma_2\oplus\sigma_1$ (this is proved verbatim to \cite[Theorem 8.1.8]{bo}).  
\end{remark}

\begin{remark}
	$(i)$ We warn the reader that the last condition on the range of c.c. homomorphisms $\rho_i$ is a rather strong condition, and an RFD $C^*$-algebra is not necessarily  TRFD as an operator algebra (unless it is commutative). Also, even if $A$ is TRFD and all representations $\rho_i$ are boundary representations, then the enveloping $C^*$-algebra $C^*_e(A)$ may fail to be is RFD (though each  $\rho_i$ uniquely extends to a finite dimensional representation $\tilde\rho_i$ of $C^*_e(A)$, here is no way to guarantee that $\tilde\rho_i$'s are asymptotically isometric).   
	
	$(ii)$ As an example of a RFD operator algebra, let $\{P_n\}$ be an increasing sequence of finite rank projections in $\mathbb B(H)$, SOT-converging to the identiti. Consider,
	$$A:=\{(T_n)\in\prod_{n\geq 1} P_n\mathbb B(H)P_n: \ {\rm there\ exists}\ T\in\mathbb B(H)\ {\rm with}\ T_n\xrightarrow[]{\text{SOT}} T\}.
	$$
	Then $A$ is norm closed: Given $\{(T_n^k)\}_k\subseteq A$, if $(T_n^k)\to (T_n)$ in $\prod_{n\geq 1} P_n\mathbb B(H)P_n$, as $k\to\infty$, choose $T^k\in\mathbb B(H)$ with  $T_n^k\xrightarrow[]{\text{SOT}} T^k$, as $n\to \infty$, then since $\|T_n^k-T_n\|\to 0$, uniformly on $n$, as $k\to\infty$, a standard triangle inequality argument shows that $\{T^k\}$ is SOT-Cauchy, and so there is $T\in\mathbb B(H)$ with $T^k\xrightarrow[]{\text{SOT}} T$. Again using the above uniform convergence in norm, another triangle inequality argument shows that $T_n\xrightarrow[]{\text{SOT}} T$, that is, $(T_n)\in A$. Since SOT is jointly {\it sequentially} continuous  \cite[Page 136]{mur}, $A$ is an operator algebra (but obviously not a C*-algebra). Finally, $A$ is RFD, since for $k(i):={\rm rank}(P_i)$, the sequence of c.c. homomorphisms, 
	$$\rho_i : A\to \mathbb M_{k(i)}(\mathbb C); \ \ (T_n)\mapsto T_i,\ \ (i\geq 1),$$  
	is clearly asymptotically isometric.
	
	$(iii)$ It is not hard to modify the above example such that $A$ is also TRFD: Let $\mathcal T_n\subseteq \mathbb M_{k(n)}(\mathbb C)$ be a (maximal) triangular subalgebra. Let,
	$$A:=\{(T_n)\in\prod_{n\geq 1} \mathcal T_n: \ {\rm there\ exists}\ T\in\mathbb B(\bigoplus_n \ell^2_{k(n)})\ {\rm with}\ T_n\xrightarrow[]{\text{SOT}} T\}.
	$$
	Then $A$ is a TRFD operator algebra. 
\end{remark}

Let $A$ be a unital operator algebra and $\pi: A\to \mathbb B(H)$ be a unital completely contractive map. Let $q: \mathbb B(H)\to \mathbb B(H)/\mathbb K(H)$ be the quotient map onto the Calkin algebra. We say that $\pi$ is a ({\it faithful}) {\it representation modulo the compacts} if $q\circ \pi: A\to  \mathbb B(H)/\mathbb K(H)$ is a (completely isometric) completely contractive homomorphism.

\begin{definition}\label{ec}
	An operator algebra $A\subseteq \mathbb B(H)$ is called {\it essentially closed} if its image $q(A)$ in the Calkin algebra is norm closed. This is equivalent to requiring that  $A+\mathbb K(H)$ to be closed in $\mathbb B(H)$.   
\end{definition}

\begin{example}\label{ex}
	$(i)$  $C^*$-algebras are automatically essentially closed, since the range of $*$-homomorphisms on $C^*$-algebras are closed. More generally, if $B$ is a $C^*$-algebra, $A$ is a $C^*$-subalgebra and $J$ is a closed ideal, then $A+J$ is closed in $B$ \cite[1.8.4]{di}.
	
	$(ii)$  It was shown by Arveson the discrete triangular algebra is essentially closed: every set $\mathcal P$ of projections in $\mathbb B(H)$ determines a weakly closed unital algebra alg$\mathcal P$,
	consisting of all operators $T\in \mathbb B(H)$ satisfying $(1 - P) TP = 0$, for every $P\in\mathcal P$. Given an increasing
	sequence $\{P_n\}$ of finite rank  projections such that $P_n\uparrow	1$ in SOT, $\mathcal T:=$alg$\{P_n\}$ is essentially closed \cite[Proposition 2.1]{ar}. Indeed, $\mathcal T+\mathbb K(H)=\mathcal{QT}$, consisting of all quasitriangular operators w.r.t. $\{P_n\}$ \cite[Corollary of Theorem 2.2]{ar}.
	
		$(ii)$  The result of Arveson is extended by  Loebl and  Muhly  to all nest algebras \cite{lm}. 
		
		$(iii)$ The algebra
		of subnormal operators and Toeplitz algebra are also known to be essentially closed \cite{df}.
		
		$(iv)$ There is a commutative weakly closed operator algebra
		$A$ such that $A+\mathbb K(H)$ is not
		closed \cite[Proposition 2]{df}: let $\{e_n\}$ be a basis for $\ell^2(\mathbb Z)$. Let
		$U$ be the bilateral shift $P$ be the rank one projection onto $\mathbb C e_0$. Let $T = U + P$ and let
		$A$ be the weakly closed algebra generated by $T$, then $A\cap \mathbb K(H)=0$ and $\|T^n\|\geq\sqrt{n}$ while $\|q(T^n)\|=\|q(U^n)\|=1$, and so $A+\mathbb K(H)$ is not closed by Remark \ref{rk}$(i)$. 
\end{example}

\begin{remark}\label{rk}
	$(i)$ An operator algebra $A\subseteq \mathbb B(H)$ is essentially closed if and only if $A/(A\cap \mathbb K(H))$ and $q(A) =(A+\mathbb K(H))/\mathbb K(H)$ are isomorphic as Banach algebras \cite[Proposition 1]{df}. When $A\cap \mathbb K(H)=0$, this is equivalent to the quotient map $q$ being bounded below on $A$ \cite[Corollary 1]{df}.
	
	$(ii)$ The assumption than an operator algebra $A\subseteq \mathbb B(H)$ is essentially closed doesn't mean that the canonical map from $A/(A\cap \mathbb K(H))$ to $(A+\mathbb K(H))/\mathbb K(H)$ is an isometry (where as the conserve is clearly true,  since complete subspaces of Banach spaces are closed). 
	
	$(iii)$ As a special case of Example \ref{ex}$(i)$,  if $I$ and $J$ are closed two-sided ideals of a $C^*$-algebra, then $I + J$.  Combes and Perdrizet extended this observation by showing that the sum of a closed left ideal and a closed right ideal of a $C^*$-algebra is always closed \cite[Proposition 6.2]{cr}. This
	was rediscovered by Rudin \cite[Example 4.6]{ru} (see also, \cite[Corollary 32.2]{dw}), and again
	by Kirchberg \cite[Lemma 4.9(iv)]{k}, and Wassermann \cite[Lemma 8.1]{wa}. In fact, more is true:
	if $L$ is a closed left ideal and $R$ is a closed right ideal in a
	Banach algebra $B$ such that $L$ has a bounded right approximate identity, or
	$R$ has a bounded left approximate identity, then $L + R$ is closed. If moreover, the approximate identity is bounded by 1, then the canonical map from $L/(L\cap R)$ to $(L+R)/R$ is an isometry  \cite[Proposition 2.4]{dix}.

	$(iv)$ If $B$ is an approximately unital
	operator algebra (i.e., an operator algebra with a contractive approximate identity) and $I$ and $J$ are ideals in $B$ such that $I$ has a contractive approximate identity (cai), then 
	$I/(I \cap J) = (I + J)/J$,  completely isometrically isomorphically. In particular, $I + J$ is closed in $B$ \cite[Theorem 2.3]{abr} (the closedness of $I + J$ also follows under the weaker condition that $I$ is only a hereditary subalgebra with cai, where hereditary here means that $IBI\subseteq I$ \cite[Corollary 4.3]{abr}). A closed inspection of the proof shows that a complete isometric version of part $(ii)$ above also holds here: if $B$ is an approximately unital
	operator algebra and $I$ and $J$ are left and right ideals in $B$, respectively,  such that $I$ has a contractive right approximate identity, then 
	$I/(I \cap J) = (I + J)/J$,  completely isometrically isomorphically. In particular, for $B=\mathbb B(H)$ and $J=\mathbb K(H)$, if an operator algebra $A\subseteq \mathbb B(H)$ is a left ideal in $\mathbb B(H)$ with a contractive right approximate identity, then  the canonical map from $A/(A\cap \mathbb K(H))$ to $(A+\mathbb K(H))/\mathbb K(H)$ is not only an isometry, but indeed a complete isometry. In particular, this holds when $A$ is unital and a left ideal (indeed following the proof, one can see that it is enough to have $\mathbb K(H)A\subseteq A$), however such a condition might be too strong (for instance when we further assume that $A\cap \mathbb K(H)=0$, then $\mathbb K(H)A\subseteq A$ could not hold unless $A=0$).
	
		$(v)$ It is not true that if the restriction of the quotient map $q:\mathbb B(H)\to \mathbb B(H)/\mathbb K(H)$ to an operator algebra $A \subseteq \mathbb B(H)$  is a complete isometry, then the same holds for the  C*-algebra $C^*(A)$ generated by $A$: let $A$ be the norm-closed algebra generated by the unilateral shift $S$ in $\mathbb B(\ell^2(\mathbb Z))$.
	This is complete isometrically isomorphic to the disc algebra, and the image in the Calkin algebra is the same, while $C^*(S)\supseteq\mathbb K(\ell^2(\mathbb Z))$.  
\end{remark}

Remark \ref{rk}$(iv)$ motivates the following stronger version of Definition \ref{ec}.

\begin{definition}\label{ee}
	An operator algebra $A\subseteq \mathbb B(H)$ is called {\it essentially embedded} if the canonical map from $A/(A\cap \mathbb K(H))$ to $(A+\mathbb K(H))/\mathbb K(H)$ is  a complete isometry. 
\end{definition}

\begin{example}
	$(i)$ All $C^*$-algebras are automatically essentially embedded, since injective $*$-homomorphisms on $C^*$-algebras are automatically complete isometric. 
	
	$(ii)$ The discrete  triangular algebra $\mathcal T$ considered by Arveson is  essentially embedded \cite[Proposition 2.1]{ar}: for $T\in \mathbb B(H)$,
	$\|q(T)\|=\lim_n\|(1 - P_n) T(1 - P_n)\|$ by \cite[Lemma 1, page 292]{ar11}. Indeed,  the limit on the RHS is clearly 0 when $T$ is of rank 1, and so when $T$ is of finite rank, and so it also happens when $T$ is compact, as the set of operators for which RHS=0 is closed under norm.
	Next $\limsup_n \|(1 - P_n) T(1 - P_n)\|=\limsup_n \|(1 - P_n) (T+K)(1 - P_i)\|\leq \|T+K\|$, for each $K\in \mathbb K(H)$ and so  $\limsup_i \|(1 - P_n) T(1 - P_n)\|\leq \|q(T)\|$, where as, $\|q(T)\|\leq \liminf_n \|(1 - P_n) T(1 - P_n)\|$, as each $(1 - P_n) T(1 - P_n)$ is a finite rank perturbation of $T$. Now if  $T\in \mathcal T$ then 
	$(1 - P_n)T(1 - P_n)=T + K_n$, where $K_n =
	-P_nT - TP_n + P_nTP_i$ is a finite rank operator in $\mathcal T$ (this is a crucial part of the proof, as in general one cannot guarantee that $K_n$ is in the given operator algebra). This implies
	that $\|q(T)\|\geq\|T+(A\cap \mathbb K(H))\|$, while the inequality in other direction always hold. A similar argument for amplifications shows that the quotient map $q$ is indeed a complete isometry.
	
	$(iii)$  If $S$ is a weak$^*$-closed subspace of $\mathbb B(H)$ such that 
	$S\cap \mathbb K(H)$ is weak$^*$-dense in $S$, then $S$ is essentially embedded \cite[ Corollary 11.7]{da1}.    
	
	$(iv)$ Any non-commutative irreducible algebra $A$ of almost normal
	operators containing the identity fails to be essentially embedded (since the identity representation on $C^*(A)$ is a boundary
	representation \cite[Theorem 2.2.2]{ar11}, see Remark \ref{bdry}$(iii)$).
	
\end{example}
The next lemma related the notion of essential embedding to representations modulo the compacts.

\begin{lemma}\label{mod}
	Let $A$ be an  operator algebra and the complete contraction  $\pi: A\to \mathbb B(H)$  be a faithful representation modulo the compacts, then the operator algebra generated by $\pi(A)$ is essentially embedded in $\mathbb B(H)$.
\end{lemma}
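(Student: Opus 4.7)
The plan is to show that the canonical comparison map $c\colon B/(B\cap \mathbb K(H))\to (B+\mathbb K(H))/\mathbb K(H)$ is a complete isometry, where $B$ denotes the norm-closed operator algebra generated by $\pi(A)$. The idea is to factor the hypothesized completely isometric map $q\circ\pi$ through $c$ via a completely contractive surjection and then run a two-sided norm inequality.

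First I would pin down the image of $B$ in the Calkin algebra. Because $q\pi$ is a homomorphism, $\pi(ab)-\pi(a)\pi(b)\in\ker q=\mathbb K(H)$ for every $a,b\in A$, so the purely algebraic algebra $B_0$ generated by $\pi(A)$ satisfies $B_0\subseteq \pi(A)+\mathbb K(H)$ and hence $q(B_0)=q\pi(A)$. Complete isometry of $q\pi$ together with completeness of $A$ make $q\pi(A)$ norm closed in the Calkin algebra, so continuity of $q$ yields
$$q(B)\;\subseteq\;\overline{q(B_0)}\;=\;\overline{q\pi(A)}\;=\;q\pi(A),$$
and the reverse inclusion is immediate. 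In particular $(B+\mathbb K(H))/\mathbb K(H)=q(B)=q\pi(A)$ is closed.

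Next I would introduce $\tilde\pi\colon A\to B/(B\cap\mathbb K(H))$, $a\mapsto \pi(a)+B\cap\mathbb K(H)$, which is completely contractive as a composition of $\pi$ with a quotient. For surjectivity, given $b\in B$ the previous step provides $a\in A$ with $q(b)=q\pi(a)$, whence $b-\pi(a)\in B\cap\mathbb K(H)$ and $\tilde\pi(a)$ represents $b$ in the quotient; the same argument applied entrywise gives surjectivity of each amplification $M_n(\tilde\pi)$. Since $c\circ\tilde\pi=q\pi$, for any $[b_{ij}]\in M_n(B)$ I may choose $[a_{ij}]\in M_n(A)$ with $\tilde\pi([a_{ij}])=[b_{ij}]+M_n(B\cap \mathbb K(H))$ and compute
$$\|[a_{ij}]\|\;=\;\|q\pi([a_{ij}])\|\;=\;\|c\tilde\pi([a_{ij}])\|\;\le\;\|\tilde\pi([a_{ij}])\|\;\le\;\|[a_{ij}]\|,$$
forcing equality throughout, which says exactly that $c$ is a complete isometry and $B$ is essentially embedded. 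The only potentially delicate step is the closure argument in the first paragraph; it is there that the complete isometry of $q\pi$ (and not merely faithfulness of $\pi$) is essential, for without it $q\pi(A)$ might fail to be closed and $q(B)$ could strictly contain $q\pi(A)$.
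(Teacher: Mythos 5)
Your proof is correct, and it takes a more careful route than the paper's own argument, which is worth spelling out. The paper simply notes that $\|q(\pi(a))\|=\|a\|\geq\|\pi(a)\|\geq\|q(\pi(a))\|$, so that $q$ is isometric on the set $\pi(A)$, and then asserts that, $q$ being a continuous homomorphism, it is isometric on the whole generated algebra $B$ (and completely so after amplification). That inference is exactly the point your argument handles properly: as your inclusion $B_0\subseteq\pi(A)+\mathbb K(H)$ shows, a general element of $B$ has the form $\pi(a)+K$ with $K$ compact, and the compact perturbation can raise the operator norm while leaving the Calkin norm unchanged (indeed $B\cap\mathbb K(H)$ may well be nonzero, containing elements such as $\pi(a)\pi(b)-\pi(ab)$, in which case $q$ cannot even be injective on $B$). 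What the definition of essential embedding actually requires is that the induced map $c$ on $B/(B\cap\mathbb K(H))$ be completely isometric, and your factorization $q\pi=c\circ\tilde\pi$ through the completely contractive surjection $\tilde\pi$, combined with the sandwich $\|[a_{ij}]\|=\|c^{(n)}\tilde\pi^{(n)}([a_{ij}])\|\leq\|\tilde\pi^{(n)}([a_{ij}])\|\leq\|[a_{ij}]\|$, delivers exactly that. The one observation that makes the quotient argument close --- and which is absent from the paper's proof --- is that the compact error $b-\pi(a)$ lies in $B\cap\mathbb K(H)$ and not merely in $\mathbb K(H)$, so that $\tilde\pi$ really is onto $B/(B\cap\mathbb K(H))$. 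In short, your argument proves the stated lemma in full generality, whereas the paper's stronger intermediate claim that $q$ is isometric on all of $B$ holds only when $B\cap\mathbb K(H)=0$ and is not justified by isometry on the generating set alone.
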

\begin{proof}
Let $q$ be the quotient map onto the Calkin algebra on $H$. Since $\pi$  is faithful  modulo the compacts, for each  $a\in A$, 
$$\|q(\pi(a))\|=\|a\|\geq \|\pi(a)\|,$$
and as the reverse inequality is automatic, $q$ is isometric on $\pi(A)$. Since $q$ is a continuous homomorphism, it follows that $q$ is also isometric on the operator algebra generated by $\pi(A)$. A similar argument on amplifications shows that $q$ is indeed a complete isometry on this operator algebra.
\end{proof}

\begin{remark}
$(i)$ Note that the converse of the above lemma is not true: if the operator algebra generated by the range of a complete contraction  $\pi: A\to \mathbb B(H)$ is essentially embedded, $\pi$ may fail to be  faithful modulo the compacts (indeed, if $\pi$ is not an isometry, then so is $q\circ\pi$). 

$(ii)$ One could use the above lemma to construct examples of operator algebras which are essentially embedded and QT. 

$(iii)$ More examples could be constructed using nest algebras as follows: Let $\mathcal P:=\{P_nH\}$ be the nest consisting of the ranges of finite rank projection SOT-increasing to the identity. Let $\mathcal T(\mathcal P)$ and $\mathcal {QT}(\mathcal P)$ be the set of operators $T$ with $(1-P_n)TP_n=0$ and $(1-P_n)TP_n\to 0$, respectively. It follows from \cite[Theorem 
12.2]{da1} that $\mathcal {QT}(\mathcal P)=\mathcal {T}(\mathcal P)+\mathbb K(H)$ (we warn the reader that $\mathcal {T}(\mathcal N)+\mathbb K(H)$ need not consist of  
quasitriangular operators, for a general nest $\mathcal N$; c.f., \cite[Chapter 21]{da1}). 
By \cite[Theorem 12.1]{da1}, $\mathcal {QT}(\mathcal P)$ is norm closed, and 
the quotient map:  $$\sigma: \mathcal {T}(\mathcal P)/(\mathcal {T}(\mathcal P)\cap \mathbb K(H))\to (\mathcal {T}(\mathcal P)+\mathbb K(H))/\mathbb K(H)$$ is isometric (Indeed, $\mathcal {T}(\mathcal P)$ is weak$^*$-closed and by the Erd\"os Density Theorem \cite[Theorem 3.11]{da1}, $\mathcal {T}(\mathcal P)\cap \mathbb K(H)$ is weak$^*$ dense, and so an M-ideal in $\mathcal {T}(\mathcal P)$ \cite[Theorem 11.5, Corollary 11.6]{da1} and so the quotient map is isometric by \cite[Theorem 11.3]{da1}). Repeating this argument for the inflation $\sigma^{(n)}$ for the Hilbert space $H\otimes \ell^2_n$, we get that $\sigma$ is indeed completely isometric). Thus $\mathcal {T}(\mathcal P)$ is essentially embedded. Since $\mathcal {QT}(\mathcal P)+\mathbb K(H)=\mathcal {T}(\mathcal P)+\mathbb K(H)$ and $\mathcal {QT}(\mathcal P)\cap\mathbb K(H)=\mathcal {T}(\mathcal P)\cap\mathbb K(H)$, it follows that $\mathcal {QT}(\mathcal P)$ is also essentially embedded. Now, $\mathcal {QT}(\mathcal P)$ is clearly QT, but it is not triangular or block triangular, as it contains all compact operators.     
\end{remark}

\begin{lemma}\label{lemma4}
	Let $A\subseteq \mathbb B(H)$ be an essentially embedded operator algebra with $A\cap \mathbb K(H)=0$. Then each  c.c. representation $\pi: A\to \mathbb B(K)$  extends to c.c. representation  $\bar\pi: A+\mathbb K(H)\to \mathbb B(K)$ which is zero on $\mathbb K(H)$.   
\end{lemma}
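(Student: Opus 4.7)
The plan is to exploit the essentially embedded hypothesis to invert the quotient map on $A$, and then realize $\bar\pi$ as a three-fold composition. Let $q:\mathbb B(H)\to \mathbb B(H)/\mathbb K(H)$ denote the quotient onto the Calkin algebra.

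First I would verify that, under the hypotheses, the restriction $q|_A:A\to (A+\mathbb K(H))/\mathbb K(H)$ is a completely isometric isomorphism of operator algebras. Injectivity is immediate from $A\cap \mathbb K(H)=0$; surjectivity is clear since every element of $A+\mathbb K(H)$ has the same Calkin class as its $A$-component; and the completely isometric assertion is precisely what Definition \ref{ee} reduces to once $A\cap \mathbb K(H)=0$ is used (the canonical map $A/(A\cap\mathbb K(H))\to(A+\mathbb K(H))/\mathbb K(H)$ simply is $q|_A$ in this case). Consequently, $(q|_A)^{-1}:(A+\mathbb K(H))/\mathbb K(H)\to A$ is itself a completely contractive homomorphism.

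Next I would define $\bar\pi:=\pi\circ (q|_A)^{-1}\circ q$, viewed as a map $A+\mathbb K(H)\to \mathbb B(K)$, where the outer $q$ is restricted to $A+\mathbb K(H)$ (whose image inside the Calkin algebra is exactly $(q|_A)(A)$, so the composition is well defined). Each of the three factors is a c.c.\ homomorphism, so $\bar\pi$ is a c.c.\ homomorphism. The remaining verifications are immediate: for $a\in A$, $q(a)=(q|_A)(a)$, hence $(q|_A)^{-1}(q(a))=a$ and $\bar\pi(a)=\pi(a)$; for $k\in \mathbb K(H)$, $q(k)=0$ forces $\bar\pi(k)=0$.

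The main conceptual step is the first one; once the completely isometric isomorphism $q|_A$ is in hand, the rest is a formal diagram chase. I do not expect any serious obstacle, since Definition \ref{ee} is tailored precisely to make the inverse of $q|_A$ available as a c.c.\ map. The only point that warrants some care is that the operator space structure on $(A+\mathbb K(H))/\mathbb K(H)$ appearing in Definition \ref{ee} must be the one inherited from the Calkin algebra, so that the restriction of $q$ to $A+\mathbb K(H)$ and $(q|_A)^{-1}$ are completely contractive for the same operator space structure and their composition remains c.c., as required.
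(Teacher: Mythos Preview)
Your proof is correct and follows exactly the same approach as the paper: both construct $\bar\pi$ as the composition $\pi\circ\sigma\circ q$, where $\sigma$ is the completely isometric inverse of $q|_A$ furnished by the essentially embedded hypothesis together with $A\cap\mathbb K(H)=0$. Your version simply spells out in more detail why $(q|_A)^{-1}$ exists and is a c.c.\ homomorphism, which the paper compresses into a single sentence.
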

\begin{proof}
	Since $A$ is essentially embedded and $A\cap \mathbb K(H)=0$, there is a surjective c.i. homomorphism $\sigma: (A+\mathbb K(H))/\mathbb K(H)\to A$. The composition $\bar\pi:=\pi\circ\sigma\circ q: A+\mathbb K(H)\to \mathbb B(K)$ has all the required properties.   
\end{proof}

 The next result is an slight extension of Voiculescu-Weyl-von Neumann theorem to separable operator algebras.

\begin{lemma}\label{lemma2}
	Let $A\subseteq \mathbb B(H)$ be a separable unital essentially embedded operator algebra  and $\pi: A\to \mathbb B(K)$ be a u.c.c. map with $\pi(A\cap \mathbb K(H))=0$. Then there are isometries $V_n: K\to H$ with $\pi(a)-V_n^*aV_n\in \mathbb K(K)$, for each $a\in A$ and $n\geq 1$, such that ad${}_{V_n}\to \pi$, point-norm on $A$.
\end{lemma}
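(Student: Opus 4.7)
The plan is to reduce to the classical Voiculescu--Arveson form of the Weyl--von Neumann theorem for unital completely positive (u.c.p.) maps on separable C*-algebras. Concretely, I will build a u.c.p.\ extension $\Phi:\mathbb B(H)\to\mathbb B(K)$ of $\pi$ which vanishes on $\mathbb K(H)$, restrict it to the separable unital C*-subalgebra $B:=C^*(A)\subseteq\mathbb B(H)$, and invoke Voiculescu's absorption theorem for the u.c.p.\ map $\Phi|_B$.

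For the extension step, note first that $\pi$ factors through the quotient as a u.c.c.\ map $\hat\pi:A/(A\cap\mathbb K(H))\to\mathbb B(K)$. The essentially embedded hypothesis yields a completely isometric isomorphism $\iota: A/(A\cap\mathbb K(H))\to (A+\mathbb K(H))/\mathbb K(H)$, so $\tilde\pi:=\hat\pi\circ\iota^{-1}$ is u.c.c.\ on the unital operator subalgebra $(A+\mathbb K(H))/\mathbb K(H)$ of the Calkin algebra $\mathcal C(H):=\mathbb B(H)/\mathbb K(H)$. Paulsen's $2\times 2$ off-diagonal trick promotes $\tilde\pi$ to a u.c.p.\ map on the operator system it generates, and then Arveson's extension theorem produces a u.c.p.\ extension $\tilde\Phi:\mathcal C(H)\to\mathbb B(K)$. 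Setting $\Phi:=\tilde\Phi\circ q$, where $q:\mathbb B(H)\to \mathcal C(H)$ is the quotient map, delivers a u.c.p.\ map on $\mathbb B(H)$ that restricts to $\pi$ on $A$ and vanishes identically on $\mathbb K(H)$.

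For the Voiculescu step, the restriction $\Phi|_B$ is a u.c.p.\ map from the separable unital C*-algebra $B$ into $\mathbb B(K)$ with $\Phi(B\cap\mathbb K(H))=0$. The Voiculescu--Arveson absorption theorem in this setting (the identity embedding $B\hookrightarrow\mathbb B(H)$ being a faithful unital representation of a separable C*-algebra and $\Phi|_B$ annihilating the compacts in $B$) then provides a sequence of isometries $V_n:K\to H$ such that for every $b\in B$, $V_n^*bV_n-\Phi(b)\in\mathbb K(K)$ and $\|V_n^*bV_n-\Phi(b)\|\to 0$. Restricting to $a\in A$ and using $\Phi(a)=\pi(a)$ yields the claimed properties of the $V_n$.

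The main obstacle is the first step: producing a u.c.p.\ map on $\mathbb B(H)$ that extends $\pi$ and simultaneously annihilates every compact operator, not only those in $A$. A naive direct application of Arveson's theorem to $\pi$ on $A\subseteq\mathbb B(H)$ would extend $\pi$ to $\mathbb B(H)$ but offer no control over the behavior of the extension on $\mathbb K(H)$. The essentially embedded hypothesis is precisely what lets us perform the extension one level up---at the Calkin algebra, where vanishing on $\mathbb K(H)$ is built into the domain---and then pull it back via $q$. This is the same structural observation underlying Lemma~\ref{lemma4}, refined here by passing through $A/(A\cap\mathbb K(H))$ so that the argument also accommodates the case $A\cap\mathbb K(H)\neq 0$.
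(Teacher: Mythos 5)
Your proposal is correct and follows essentially the same route as the paper: use the essentially embedded hypothesis to factor $\pi$ through the Calkin algebra so that the resulting map annihilates the compacts, promote it to a u.c.p.\ map on all of $\mathbb B(H)$ via the Paulsen/Arveson extension machinery, and then apply the Voiculescu--Weyl--von Neumann absorption theorem to its restriction to the separable C*-algebra generated by $A$. The only cosmetic difference is that you perform the Arveson extension at the Calkin-algebra level and pull back along $q$, whereas the paper (via its Lemma~\ref{lemma4}) extends the induced map on $A+\mathbb K(H)$ directly inside $\mathbb B(H)$; your detour through $A/(A\cap\mathbb K(H))$ also treats the case $A\cap\mathbb K(H)\neq 0$ a bit more explicitly than the paper's citation of that lemma, whose stated hypotheses assume $A\cap\mathbb K(H)=0$.
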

\begin{proof}
By Lemma \ref{lemma4}, we get a c.c. representation  $\bar\pi: A+\mathbb K(H)\to \mathbb B(K)$ which vanishes on $\mathbb K(H)$. By \cite[1.2.3, 1.2.8]{ar1}, $\bar\pi$ extends to a completely positive map $\tilde\pi: \mathbb B(H)\to \mathbb B(K)$, still vanishing on $\mathbb K(H)$. Thus without loss of generality we may assume that $A$ contains $\mathbb K(H)$ and $\pi$ vanishes on $\mathbb K(H)$. Next, the $C^*$-algebra $B$ generated by the operator system $\overline{A+A^*}$ in $\mathbb B(H)$ is a separable $C^*$-algebra containing $\mathbb K(H)$ satisfying $\tilde\pi(B\cap \mathbb K(H))=\tilde\pi(\mathbb K(H))=0$. 
By the Voiculescu-Weyl-von Neumann theorem (c.f. \cite[II.5.3]{da}), there are isometries $V_n: K\to H$ with $\tilde\pi(b)-V_n^*bV_n\in \mathbb K(K)$, for each $b\in B$ and $n\geq 1$ such that ad${}_{V_n}\to \tilde\pi$, point-norm on $B$. The same isometries do the job for $A$ as well.
\end{proof}

\begin{lemma}\label{qt2}
	Let $A\subseteq \mathbb B(H)$ be a separable  operator algebra  and let $(\pi, H_\pi)$ and $(\sigma, H_\sigma)$ be representations of $A$ such that there are partial isometries $V_n: H_\pi\to H_\sigma$ with image projection 1 such that $\|\sigma(a)V_n-V_n\pi(a)\|\to 0$, for $a\in A$. Then if $\sigma(A)$ is a QT set of operators, so is $\pi(A)$.
\end{lemma}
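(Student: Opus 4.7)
The strategy is to pull back a quasitriangular witness from $H_\sigma$ to $H_\pi$ through the coisometries $V_n$, reading ``image projection $1$'' as $V_nV_n^*=I_{H_\sigma}$, so that $V_n^*$ is an isometric embedding $H_\sigma \hookrightarrow H_\pi$. Fix finite sets $\mathfrak F'=\{\pi(a_1),\dots,\pi(a_m)\}\subseteq\pi(A)$, $F'=\{v_1,\dots,v_k\}\subseteq H_\pi$, and $\varepsilon>0$. Choose $\delta>0$ small (to be fixed below) and, using the intertwining hypothesis, pick $n$ so large that the defects $e_i:=\sigma(a_i)V_n-V_n\pi(a_i)$ satisfy $\|e_i\|<\delta$ for every $i$.

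Next, apply the hypothesis that $\sigma(A)$ is QT to the finite operator set $\{\sigma(a_i)\}$, the finite vector set $\{V_nv_j\}\subseteq H_\sigma$, and tolerance $\delta$, to obtain a finite rank projection $R$ on $H_\sigma$ with $\|R\sigma(a_i)-R\sigma(a_i)R\|<\delta$ and $\|RV_nv_j-V_nv_j\|<\delta$. Define the candidate witness on $H_\pi$ by $Q:=V_n^*RV_n$. Here the coisometry identity is used exactly to ensure $Q$ is a projection: $Q^2=V_n^*R(V_nV_n^*)RV_n=V_n^*R^2V_n=Q$ and $Q^*=Q$, with $\operatorname{rank}(Q)\leq\operatorname{rank}(R)<\infty$.

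It remains to verify the two QT conditions for $Q$. Substituting $V_n\pi(a_i)=\sigma(a_i)V_n-e_i$ in both terms of $Q\pi(a_i)-Q\pi(a_i)Q$ and again collapsing $V_nV_n^*=I_{H_\sigma}$ in the middle of $V_n^*R\sigma(a_i)V_nV_n^*RV_n$ gives
\[
Q\pi(a_i)-Q\pi(a_i)Q=V_n^*\bigl(R\sigma(a_i)-R\sigma(a_i)R\bigr)V_n-V_n^*Re_i(I-Q),
\]
whose norm is bounded by $\delta+\delta\cdot 1=2\delta$. For the vector part,
\[
\|Qv_j-v_j\|\leq\|RV_nv_j-V_nv_j\|+\|(I-V_n^*V_n)v_j\|<\delta+\|(I-V_n^*V_n)v_j\|,
\]
so that choosing $\delta<\varepsilon/3$ delivers the required estimates, provided the residual initial-space term is also $<\varepsilon/3$.

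The main obstacle is precisely the control of $\|(I-V_n^*V_n)v_j\|$, the gap between the initial projection of $V_n$ and the identity on $H_\pi$. This term vanishes automatically when the $V_n$ are actually unitaries, and in the applications of Lemma \ref{qt2} it is supplied by the Voiculescu--Weyl--von Neumann type construction of the $V_n$, in which the initial projections $V_n^*V_n$ converge SOT to $I_{H_\pi}$ (so $n$ may be chosen large enough that $V_n^*V_n$ approximately fixes each $v_j$). Granting this ingredient, the three estimates combine to produce, for any prescribed $(\mathfrak F',F',\varepsilon)$, a finite rank projection $Q$ on $H_\pi$ witnessing the QT condition for $\pi(A)$; the essential content of the argument is the transport formula $Q=V_n^*RV_n$ together with the coisometry identity $V_nV_n^*=I_{H_\sigma}$.
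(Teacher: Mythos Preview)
Your transport construction $Q=V_n^*RV_n$ and the commutator estimate are exactly what the paper does: it sets $Q_n:=V_n^*P_nV_n$, uses $V_nV_n^*=1$ to see that $Q_n$ is a finite rank projection, and then bounds $\|Q_n\pi(a)-Q_n\pi(a)Q_n\|$ by $\|P_n\sigma(a)-P_n\sigma(a)P_n\|$ (the paper writes an equality there, silently absorbing the defect $e_i$ you isolate). So on the operator side your argument and the paper's are the same, with yours being more careful.

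The point you flag about the vector condition is real. The paper's proof simply stops after the commutator estimate with ``as required'' and never verifies $\|Q_n\xi-\xi\|\to 0$ for $\xi\in G_n$; it only arranges $\|P_nV_n\xi-V_n\xi\|$ small, which, as you compute, yields $\|Q_n\xi-\xi\|\leq\|P_nV_n\xi-V_n\xi\|+\|(I-V_n^*V_n)\xi\|$. The residual term $\|(I-V_n^*V_n)\xi\|$ is not controlled by the stated hypotheses (image projection $1$ says nothing about the initial projection), and your observation that in the intended application the $V_n$ come from a Voiculescu-type construction with $V_n^*V_n\to I$ in SOT is exactly the missing ingredient. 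In short, you have reproduced the paper's argument and, in addition, located a gap that the paper's own proof glosses over; the lemma as stated needs either the extra SOT-convergence hypothesis on the initial projections or the understanding that it is only used together with Lemma~\ref{lemma2}, where that convergence is available.
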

\begin{proof}
	By definition, for finite subsets $F_n\subseteq H_\sigma$ and $\mathfrak F_n\subseteq A$, there are finite rank projections $P_n\in B(H_\sigma)$ such that
	$$\|P_n\sigma(a)-P_n\sigma(a)P_n\|\approx
0, \ \|P_n\xi\|\approx\|\xi\|\ \ \ (a\in \mathfrak F_n, \xi\in F_n).$$ 
Given $G_n\subseteq H_\pi$, let $F_n:=V_n(G_n)$ and $Q_n:=V_n^*P_nV_n$. Then each $Q_n\in B(H_\pi)$ is a projection, since $V_nV_n^*=1$, and it is of finite rank, since $P_n$ is so. Also,
\begin{align*}
	\|Q_n\pi(a)-Q_n\pi(a)Q_n\| &=\|V_n^*P_nV_n\pi(a)-V_n^*P_nV_n\pi(a)V_n^*P_nV_n\|\\
	&=\|V_n^*P_n\sigma(a)V_n-V_n^*P_n\sigma(a)P_nV_n\|\\
	&\leq\|P_n\sigma(a)-P_n\sigma(a)P_n\|\approx 0,
\end{align*} 
as required.
\end{proof}

Now we are ready to prove the analog of Voiculescu theorem for QT algebras. The original proof (c.f. \cite[7.2.5]{bo}) is based on Voiculescu constants 
$$\eta_{\pi}(a):=2{\rm max}\big(\|\pi(a^*a)-\pi(a^*)\pi(a)\|^{\frac{1}{2}}, \|\pi(aa^*)-\pi(a)\pi(a^*)\|^{\frac{1}{2}}\big),$$
defined for faithful representations $\pi $ modulo the compacts for the case  of $C^*$-algebras (c.f. \cite[1.7.6]{bo}). This argument is obviously not usable for operator algebras, and instead we employ the above lemma (which gives an alternative way, also suggested by the work of Voiculescu). Also note that a proof by reduction to the  $C^*$-algebra generated by the given operator algebra does not seem to be possible, as it is not the case that if an operator algebra is QT as a family of operators, the it generated a  $C^*$-algebra which is a QD family of operators (this is because the notion of QT families defined by Halmos is an upper quasitriangular family which switches to a lower quasitriangular family, when taking adjoint).

\begin{proof}[Proof of Theorem~\ref{qtr}] $(i)\Rightarrow (iii).$ Since $A$ is separable, we may choose a {\it sequence} of completely contractive maps $\phi_n: A\to \mathbb M_{k_n}(\mathbb C)$ which are asymptotically isometric and asymptotically multiplicative, that is, 
$$\|\phi_n(a)\|\approx \|a\|, \ \ \|\phi_n(ab)-\phi_n(a)\phi_n(a)\|\approx 0,$$
for $a\in F_n$, where $1\in F_1\subseteq F_2\subseteq \cdots$ is an increasing sequence of finite subsets of $A$ with dense union. Take any faithful unital essential representation $\pi: A\to \mathbb B(H)$ in a separable Hilbert space $H$. Consider the faithful representation modulo the compacts $\Phi:=\oplus\phi_n: A\to \prod_{n=1}^{\infty} \mathbb M_{k_n}(\mathbb C)$, which is a u.c.c. map. Compose $\Phi$ with the quotient map $q$ from $B(\bigoplus_{n=1}^{\infty}\ell^2_{k(n)})$ onto its Calkin algebra and embed the latter in $\mathbb B(K)$, for some Hilbert space $K$, to get a u.c.c. map $\Pi:=q\circ \Phi: A\to \mathbb B(K)$. Since $\pi$ is essential,  we may assume that $A\subseteq \mathbb B(H)$ with $\pi(A\cap \mathbb K(H))=0$. By Lemma \ref{lemma2} (switching the role of $V_n$ and $V_n^*$), there are partial isometries $V_n: H\to K$ with range projection 1 such that $\Pi(a)-V_n\pi(a)V_n^*\in \mathbb K(K)$, for each $a\in A$ and $n\geq 1$, and
$$\|\Pi(a)-V_n\pi(a)V_n^*\|\approx 0, \ \ (a\in A).$$

 By Lemma \ref{qt2}, it is enough to show that $\Pi(A)$ is a QT set of operators, which is in turn follows if we show that $\Phi(A)$ is a QT set of operators (since $q$ is norm decreasing). Put $X:=\bigoplus_{n=1}^{\infty}\ell^2_{k(n)}$ and let $X^{(k)}$ and $X^{(\infty)}$ be the direct sum of $k$ and countably many copies of $X$ with itself, respectively. Consider representations $\Phi^{(k)}$ and $\Phi^{(\infty)}$ of $A$ on  $X^{(k)}$ and $X^{(\infty)}$. Let $P_n$ be the projection in $X$ onto $\ell^2_{k(n)}$, and not that the relations of asymptotic isometry and asymptotic multiplicativity could be rewritten in terms of projections $P_n$ as
  $$\|P_n\Phi(a)P_n\|\approx \|a\|, \ \ \|P_n\Phi(a)-P_n\Phi(a)P_n\|\approx 0,$$
  for $a\in F_n$. Let $X_k:=\bigoplus_{n=k}^{\infty}\ell^2_{k(n)}$and let $Q_k$ be the orthogonal projection onto $X_k$. Then
  $$[Q_k, \Phi(a)]=\bigoplus_{n\geq k} [P_n, \phi_n(a)]\in \mathbb K(X),$$
  for $a\in\cup_{n} F_n$, and so by density for any $a\in A$. Now since ${\rm ad}_{Q_n}\circ \Phi: A\to B(X)$ is a faithful representation modulo the compacts and $A$ is essentially embedded, then 
  applying Lemma \ref{lemma2} again, there are partial isometries $U_n: X^{(k)}\to X$ with $\Im(V_n)\subseteq X_n$ such that $\|U_n\Phi^{(k)}(a)-\Phi(a)U_n\|\approx 0$, for $a\in A$. To see that $\Phi(A)$ is a QT set of operators, without loss of generality we may consider the finite subsets $\Phi(F_n)\subseteq \Phi(A)$ and $F\subseteq X^{(k)}$ and first choose $N_n\geq 1$ such that for the orthogonal projection $\tilde Q_n$ onto $\bigoplus_{i=n}^{N-n}\ell^2_{k(i)}$,
  $$\|\tilde Q_n\Phi(a)-\tilde Q_n\Phi(a)\tilde Q_n\|\approx 0,\ \ \|(1-\tilde Q_n)U_n\xi\|\approx 0\ \ (a\in F_n, \xi\in F).$$  
 Next, let $W_k: X\to X^{(\infty)}$ be the isometry mapping $X$ onto its $(k+1)$-copy in $X^{(\infty)}$ and put $\tilde W_n:=U_n^*\oplus W_n(1-U_nU_n^*)$, and observe that 
$\|\tilde W_n\Phi(a)-\Phi^{(\infty)}(a)\tilde W_n\|\approx 0$, for $a\in A$. Finally, put $\tilde P_n:=\tilde W_n\tilde Q_n\tilde W_n^*$, and observe that 
$$\|\tilde P_n\Phi(a)-\tilde P_n\Phi(a)\tilde P_n\|\leq \|\tilde Q_n\Phi(a)-\tilde Q_n\Phi(a)\tilde Q_n\|\approx 0,$$
and 
$$\|\tilde P_n\xi\|\leq \|\tilde Q_nU_n\xi\|\approx \|\xi\|,$$
for $a\in F_n$ and $\xi\in F$, as required. 
 
 $(iii)\Rightarrow (ii).$ This is immediate.
 
 $(ii)\Rightarrow (i).$ If $\pi: A\to \mathbb B(H)$ is a faithful QT  representation on a separable Hilbert space and $P_n$'s are finite rank projections with $P_n\uparrow 1$ in SOT, and $\|P_na-P_naP_n\|\to 0$, for each $a\in A$, as $n\rightarrow\infty$. If $k(n)$ is the rank of $P_n$, we identify $P_n\mathbb B(H)P_n$ with $\mathbb M_{k(n)}(\mathbb C)$ and regard $\phi_n(a):=P_naP_n$ as a map from $A$ into $\mathbb M_{k(n)}(\mathbb C)$. Then
 \begin{align*}
\|\phi_n(ab)-\phi_n(a)\phi_n(b)\|&=\|P_nabP_n-P_naP_nbP_n\|\\
 &\leq\|P_nab-P_naP_nb\|\\
  &\leq\|P_na-P_naP_n\|\|b\|\approx 0,
 \end{align*}
 for each $a,b\in A$. Also,
$$ \|\phi_n(a)-a\|=\|P_naP_n-a\|\approx \|P_naP_n-P_na\|\approx 0,$$
  for each $a\in A$. 
\end{proof}

\begin{remark}\label{bdry}
	$(i)$ A {\it boundary representation} of a unital operator algebra $A$ consists
	of a completely isometric homomorphism $\phi: A \to C$, where $C$ is a C*-algebra
	and $C^*(\phi(A)) = C$, together with a representation $\pi: C \to\mathbb B(H)$ such that
	the only completely positive map on $C$ agreeing with $\pi$ on $\phi(A)$ is $\pi$ itself. (In
	the original definition by Arveson, boundary representations are also assumed to be
	irreducible, but this condition was dropped later). It immediately follows that in Theorem~\ref{qtr}, in part $(iii)$ ``essential  representation'' could be replaced by ``essential boundary representation''. Furthermore, since every separable operator algebras is known to posses a faithful boundary representation \cite{ar2}, \cite{k} the same replacement could be done in part $(ii)$. This observation is  for those who prefer to work with homomorphisms with extension to $*$-homomorphisms on the $C^*$-algebra generated by the image of $A$. The boundary representations appear in the study of the noncommutative Choquet boundary (the peak points of the Shilov boundary) of operator systems \cite{ar2}. Arveson showed that unital operator algebras have enough boundary representations to generate the enveloping $C^*$-algebras. This is extended to operator systems by Ken Davidson and Matt Kennedy \cite{dk}, and reconfirmed by a result of Dritschel and McCullough who showed (dropping the irreducibility condition of Arveson) that any family of representations of an operator algebra or an operator space (in Agler’s sense) has boundary representations \cite{dm}, and thereby gave a direct proof of Arveson's result (compare to the results in \cite{h}).
	
	$(ii)$ One might suspect that since boundary representations extend to  representations of  an enveloping $C^*$-algebra, an indirect proof of the above result is possible for such representations by extending and using Voiculescu theorem for the enveloping $C^*$-algebra, but this is not possible, since the the enveloping $C^*$-algebra of a QT operator algebra is not necessarily QD. Also, the extension of a faithful (resp., essential) boundary representation need not be faithful (resp., essential). 
	
	$(iii)$ The boundary theorem of Arveson relates boundary representations to essentially embedded (non closed) operator systems as follows:  if  $S$ is an irreducible set of operators on a Hilbert space $H$ containing the identity and the enveloping $C^*$-algebra $C_e^*(S)$ contains $\mathbb K(H)$. Then the identity representation of $C_e^*(S)$ is a boundary representation
	on $S$ if and only if, the quotient map $q: \mathbb B(H)\to \mathbb B(H)/\mathbb K(H)$ is not completely isometric on the
	linear span of $S\cup S^*$ \cite[Theorem 2.1.1]{ar11}.   
	\end{remark}

	To prove Proposition \ref{fg} we need a bit of preparation. The proof of the next lemma is an easy adaptation of that of \cite[Lemma 15.3]{b}, which is given here just for the sake of completeness. 
	
\begin{lemma}\label{lemma3}
Let $A$ be a separable operator algebra with a faithful representation with QT range (as a set of operators), then $A$ has such a representation on a separable Hilbert space.
\end{lemma}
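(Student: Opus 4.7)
The plan is to cut down the ambient Hilbert space $H$ of the given faithful representation $\pi : A \to \mathbb{B}(H)$ to a separable $\pi(A)$-invariant subspace $K$, so that the compression $\sigma(a) := \pi(a)|_K$ is a completely isometric homomorphism with QT range. By separability of $A$, I first fix a countable dense subset $\{a_n\} \subseteq A$ and, for each $k \geq 1$, a countable dense subset of $\mathbb{M}_k(A)$. For each such matrix $b$, I would pick a countable family of unit vectors in $H^k$ realizing $\|\pi^{(k)}(b)\|$ up to tolerance $1/m$, and take $\mathcal{V}_0 \subseteq H$ to be the countable set of all their coordinates; these vectors will witness the complete isometry of the eventual compression.

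Next, I would construct countable sets $\mathcal{V}_0 \subseteq \mathcal{V}_1 \subseteq \cdots \subseteq H$ by induction. At stage $n$, first enlarge $\mathcal{V}_n$ to $\mathcal{W}_n := \mathcal{V}_n \cup \{\pi(a_i) v : i \leq n,\ v \in \mathcal{V}_n\}$ (still countable), which drives eventual $\pi(A)$-invariance. Then, for each of the countably many pairs $(F, F')$ with finite $F \subseteq \{a_1, \ldots, a_n\}$ and finite $F' \subseteq \mathcal{W}_n$, apply the QT property of $\pi(A) \subseteq \mathbb{B}(H)$ to pick a finite rank projection $P_{F,F'}$ on $H$ with $\|P_{F,F'} \pi(a) - P_{F,F'} \pi(a) P_{F,F'}\| < 1/n$ for $a \in F$ and $\|P_{F,F'} v - v\| < 1/n$ for $v \in F'$. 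Let $\mathcal{V}_{n+1}$ consist of $\mathcal{W}_n$ together with an orthonormal basis for each finite-dimensional range $P_{F,F'} H$; this keeps $\mathcal{V}_{n+1}$ countable. Setting $K := \overline{\mathrm{span}} \bigcup_n \mathcal{V}_n$ yields a separable closed subspace of $H$; by construction $\pi(a_i) K \subseteq K$ for every $i$, hence $\pi(A) K \subseteq K$ by density and continuity, so $\sigma(a) := \pi(a)|_K$ is a well-defined c.c. homomorphism $A \to \mathbb{B}(K)$, and complete isometry follows since the norm-witnessing vectors of step one lie in $K^k$.

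To verify that $\sigma(A) \subseteq \mathbb{B}(K)$ is QT, I would take a finite $F \subseteq A$, a finite $F' \subseteq K$, and $\varepsilon > 0$; a standard three-$\varepsilon$ approximation using the dense countable sets $\{a_n\} \subseteq A$ and $\bigcup_n \mathcal{V}_n \subseteq K$ reduces the problem to a pair $(F, F')$ appearing in the inductive construction with $1/n < \varepsilon$. The corresponding projection $P_{F,F'}$ has range inside $K$ by design, so $P_{F,F'}|_K$ is a finite rank projection on $K$ whose QT estimates on $K$ are dominated by those of $P_{F,F'}$ on $H$, yielding the defining inequalities for QT of $\sigma(A)$. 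The main technical obstacle is the circular dependence between $K$ and the QT-witnessing projections: the projections must have ranges inside $K$, but $K$ itself is built from those very ranges. The inductive diagonal construction resolves this by requesting QT behavior at stage $n$ only for finitely many pairs drawn from the first $n$ data, and then absorbing the resulting ranges into the next stage before proceeding.
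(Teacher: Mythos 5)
Your proposal is correct and follows essentially the same route as the paper's proof: both cut the ambient space down to a separable $\pi(A)$-invariant subspace by an inductive construction that interleaves norm-witnessing vectors (for faithfulness), the orbit under a dense sequence of $A$ (for invariance), and the ranges of the QT-witnessing finite rank projections, then restrict and check the QT estimates pass to the subspace. The only differences are cosmetic: the paper arranges a single increasing sequence of projections $Q_n$ fixing a growing orthonormal set (so $Q_n\to 1$ in SOT on $K$), whereas you keep one projection per pair $(F,F')$ and finish with an approximation argument; you are also slightly more explicit than the paper in witnessing the matrix-level norms needed for \emph{complete} isometry.
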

\begin{proof}
We shall prove a bit more by showing that for each faithful representation $\pi : A \to \mathbb B(H)$ with QT range, $H$ has a $\pi$-invariant separable subspace $K$ such that the corresponding sub-representation $\pi^K$ has a QT range. We do this by constructing an increasing sequence $(K_n)$ of separable $\pi$-invariant
subspaces of $H$ and finite rank projections $Q_n$ with range inside $K_{n+1}$ such that $\|Q_n\pi(a)-Q_n\pi(a)Q_n\|\to 0$, as $n\to\infty$ and $Q_n\to 1$ (SOT) on the union of $K_n$'s.   

By separability of $A$, we may choose a dense sequence $(a_i)$ in the unit ball
of $A$ and unit vectors $\xi_{i,n}$ with $\|\pi(a_i)\xi_{i,n}\|>\|a_i\|-2^{-n}$, for each $i,n$. Let $H_1$ be the span closure of the set of all vectors $\xi_{i,n}$ and $K_1$ be the closure of $\pi(A)H_1$. This is a separable $\pi$-invariant subspace of $H$ and the sub-representation $\pi^{K_1}$, being isometric on the sequence $(a_i)$, is faithful. Let $(e_{1,k})$ be an ONB for $K_1$, then by assumption there is a finite rank projection $Q_1$ in $\mathbb B(H)$ with $\|Q_1\pi(a_1)-Q_1\pi(a_1)Q_1\|< 2^{-1}$ and $\|Q_1(e_{1,1})-e_{1,1}\|$ as small as we wish. Indeed by an argument similar to that of \cite[Proposition 3.4]{b}, we may also arrange for equality $Q_1(e_{1,1})=e_{1,1}$ to hold. Let's do so and take $H_2$ the be the span closure of Im$(Q_1)\cup K_1$ and $K_2$ be the closure of $\pi(A)H_2$. Let $(e_{2,n})$ be an ONB for $K_2$ and choose a finite rank projection $Q_2$ in $\mathbb B(H)$ with $\|Q_2\pi(a_i)-Q_2\pi(a_i)Q_2\|< 2^{-2}$, for $i=1,2$, such that $Q_2$ fixes a finite ONB of Im$(Q_1)$ (which means that $Q_2\geq Q_1$). Proceeding inductively, we get an
increasing sequence of separable $\pi$-invariant subspaces $K_n$ with ONB $(e_{n,k})$
and increasing sequence of finite rank projections $Q_n$ with range inside $K_{n+1}$ satisfying $Q_n(e_{i,j})=e_{i,j}$, for $1\leq i,j\leq n$, and $\|Q_n\pi(a_i)-Q_n\pi(a_i)Q_n\|< 2^{-n}$, for $1\leq i\leq n$. Now the closure $K$ of the union of $K_n$'s is a separable $\pi$-invariant subspace of $H$ and the range of the corresponding (faithful) sub-representation $\pi^{K}$ is QT.
\end{proof}

Now let $A$ be a separable operator algebra and $\pi : A \to \mathbb B(H)$ be a faithful representation. Then by the above lemma and an argument as in the proof of \cite[Lemma 15.4]{b} there exists a separable $\pi$-invariant subspace $K$ of $H$ such that $\pi^K$ is faithful with $\pi^K(a)$ finite rank if and only if $\pi(a)$ is so (with equal ranks). This could be used to make another useful observation in the spirit of the previous lemma: for each faithful representation $\pi: A\to \mathbb B(H)$, if $L$ is the span closure of $\pi(A)H$, then $\pi$ has a QT range (as a set of operators) iff its non-degenerate sub-representation $\pi^L$ is so (it is wise to handle the unital case first, c.f. \cite[Lemma 15.5]{b}). 

Next take any faithful essential representation $\pi: A\to \mathbb B(H)$. Since quasitriangularity of a set of operators is defined via approximation on finite sets, the range of $\pi$ is a QT set of operators if $\pi(B)$ is QT for each separable operator subalgebra $B$ of $A$. Moreover, when $H$ is non separable, for each  finite subset  $F\subseteq H$, by the first observation in the last paragraph, there is a  separable $\pi$-invariant subspace $K$ of $H$ containing $F$ such that the corresponding sub--representation $\pi^K$ is faithful and essential. This plus Theorem~\ref{qtr} proves the next lemma, which shows that part of Theorem~\ref{qtr}  also holds for the non separable case. This is recorded in the next lemma. 

\begin{lemma}\label{lemma5}
Let $A$ be a QT  operator algebra then the range of each faithful essential representation of $A$ is QT set of
operators.
\end{lemma}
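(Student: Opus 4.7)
The plan is to reduce the non-separable situation to the separable one and then invoke Theorem~\ref{qtr}. Let $\pi \colon A \to \mathbb B(H)$ be a faithful essential representation. Since quasitriangularity of a set of operators is tested on finite subsets, fix finite $\mathfrak F \subseteq \pi(A)$, a finite $F \subseteq H$, and $\varepsilon > 0$; it suffices to produce a finite rank projection $P \in \mathbb B(H)$ with $\|PT - PTP\| < \varepsilon$ and $\|Pv - v\| < \varepsilon$ for $T \in \mathfrak F$, $v \in F$. Let $B \subseteq A$ be the closed subalgebra generated by $\pi^{-1}(\mathfrak F)$ (together with the unit, if $A$ is unital); then $B$ is separable, and by Proposition~\ref{fg} also QT.

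Second, I would pass to the separable $C^*$-subalgebra $C^*(\pi(B)) \subseteq \mathbb B(H)$ and, following the separable-reduction machinery of Lemma~\ref{lemma3} and the remarks preceding the present lemma, construct a separable $C^*(\pi(B))$-reducing subspace $K \subseteq H$ containing $F$ such that the restricted $*$-representation of $C^*(\pi(B))$ on $K$ is faithful and essential. An injective $*$-homomorphism of $C^*$-algebras is automatically isometric, hence the quotient $q_K \colon \mathbb B(K) \to \mathbb B(K)/\mathbb K(K)$ is completely isometric on $C^*(\pi^K(B))$, so also on its operator subalgebra $\pi^K(B)$. Thus $\pi^K|_B$ is a faithful, essential, and essentially embedded (by Lemma~\ref{mod}) representation of $B$ on the separable Hilbert space $K$, and Theorem~\ref{qtr}\,(i)$\Rightarrow$(iii) yields that $\pi^K(B) \subseteq \mathbb B(K)$ is a QT set of operators; pick a finite rank projection $P' \in \mathbb B(K)$ realising the QT inequalities for $\pi^K(\mathfrak F)$ and $F$ on $K$.

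Extending $P'$ by zero on $K^\perp$ gives a finite rank projection $P \in \mathbb B(H)$ with $\|Pv - v\| < \varepsilon$ for $v \in F \subseteq K$. Because $K$ reduces $\pi|_B$, for each $a \in B$ the block decomposition reads $\pi(a) = \pi^K(a) \oplus Y(a)$, so $\|P\pi(a) - P\pi(a)P\|_{\mathbb B(H)} = \|P'\pi^K(a)(I_K - P')\|_{\mathbb B(K)} < \varepsilon$. The main technical point is the reducing-subspace construction of the second step: the ``$\pi$-invariant separable subspace'' of the remarks preceding the lemma has to be upgraded to a genuinely \emph{reducing} subspace at the level of $C^*(\pi(B))$, which is possible because $B$ — and hence $C^*(\pi(B))$ — is separable, so a standard iterative construction starting from $C^*(\pi(B))\cdot F$ and adjoining countably many witness vectors at each stage produces a separable reducing $K$ on which the restricted representation is faithful and essential. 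Without this strengthening to reducing, the off-diagonal block $X(a) = P_K\pi(a)(I-P_K)$ would contribute an uncontrolled term, and the naive lift of $P'$ to $H$ would fail the QT inequality.
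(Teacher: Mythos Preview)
Your overall strategy matches the paper's: reduce to a separable (finitely generated) subalgebra $B$, pass to a separable subspace $K$ containing the given finite set $F$, and invoke Theorem~\ref{qtr} on $\pi^K|_B$. The paper also uses exactly this skeleton, citing the separable-reduction Lemma~\ref{lemma3} and the ``$\pi^K(a)$ finite rank iff $\pi(a)$ finite rank'' remark to obtain a separable $\pi$-invariant $K$ with $\pi^K$ faithful and essential, then appealing to Theorem~\ref{qtr}.

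However, there is a genuine gap in your second step. You claim one can choose a separable $C^*(\pi(B))$-reducing $K$ on which the restricted $*$-representation $\rho_K$ of $C^*(\pi(B))$ is simultaneously \emph{faithful} and \emph{essential}. This is impossible whenever $C^*(\pi(B))\cap\mathbb K(H)\neq 0$: if $0\neq T\in C^*(\pi(B))\cap\mathbb K(H)$, then $\rho_K(T)=P_KT|_K\in\mathbb K(K)$, so essentiality forces $\rho_K(T)=0$, contradicting faithfulness. And the hypothesis that $\pi$ is essential only gives $\pi(B)\cap\mathbb K(H)=0$; it does \emph{not} propagate to $C^*(\pi(B))$ --- the paper itself flags this phenomenon in Remark~\ref{rk}(v) with the unilateral-shift example, and nothing in your setup rules it out for a QT algebra $B$. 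Consequently your deduction that $q_K$ is completely isometric on $C^*(\pi^K(B))$, and hence on $\pi^K(B)$, breaks down, and you lose the ``essentially embedded'' input needed for the implication (i)$\Rightarrow$(iii) of Theorem~\ref{qtr}.

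You correctly diagnose that a merely $\pi(B)$-invariant $K$ leaves an uncontrolled off-diagonal block; the paper's terse proof does not explicitly address this either, nor does it verify the essentially-embedded hypothesis when invoking Theorem~\ref{qtr}. So your instinct to strengthen ``invariant'' to ``reducing for $C^*(\pi(B))$'' is reasonable, but the route through essentiality of the ambient $C^*$-algebra does not go through as written.
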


\begin{proof}[Proof of Proposition \ref{fg}]  The necessity is obvious. To prove the sufficiency, let $\pi : A \to \mathbb B(H)$ be a faithful essential
representation. Then for each finitely generated subalgebra $B$ of $A$, the restriction of $\pi$ to $B$ is a faithful essential representation and so by Lemma \ref{lemma5}, the image of $B$ under $\pi$ is a QT set of operators. Since this holds for each finitely generated subalgebra, it immediately follows from the definition that $\pi(A)$ is also QT. 
\end{proof} 

In the last stage of the above proof, note that although $B$ is separable, we still need to appeal to the above lemma as $H$ is not assumed to be separable.   

The next proof is an adaptation of an argument by Arveson \cite[Theorem 2]{ar3}. 

\begin{proof}[Proof of Theorem~\ref{rfd}] We only need to show the necessity,  since $B + \mathbb K(H)$ is a quasitriangular set of operators. We may assume that  $F$ is contained in
the unit ball of $A$. Take an increasing sequence $F\subseteq F_1 \subseteq F_2 \subseteq F_3 \subseteq\dots$ of finite sets with dense union in the unit ball of $A$. Again by  Theorem \ref{qtr}, $A$ is a quasitriangular set of operators and there is an increasing sequence of  finite rank projections $P_n$ converging to 1 (SOT)  such that $\|P_na- P_naP_n\| \to 0$ for each $a\in A$. Passing to a subsequence, we let $\|P_na- P_naP_n\| < \varepsilon/2n,$ for 
$a \in F_n$. Put $P_0 = 0$ and let $E_n = P_n - P_{n-1}$, then $\sum_n E_n = 1$ in SOT and $a\mapsto \delta(a):=\sum_n E_naE_n$ is a complete contraction on $A$ satisfying   
$a-\delta(a)=\sum_n (aE_n - E_naE_n)$ in SOT. Let $B$ be the operator algebra generated by the image of $\delta$ in $\mathbb B(H)$, which is clearly a block triangular algebra. When $a\in F_n$ for some $n\geq 1$, the last sum also converges in
the norm and $a-\delta(a)\in \mathbb K(H)$. By density of the union of $F_n$'s and norm continuity of $\delta$,  $a-\delta(a)\in \mathbb K(H)$ for each $a \in  A$. This in particular shows that $A$ is essentially closed iff $B$ is so. Since $a-\delta(a)\in \mathbb K(H)$, $\delta$ induces a well-defined linear map 
$$\tilde\delta: (A+\mathbb K(H))/\mathbb K(H)\to (B+\mathbb K(H))/\mathbb K(H);\ \ a+\mathbb K(H)\mapsto \delta(a)+\mathbb K(H),$$
which is contractive, since 
$$\|a+k\|=\|\delta(a)+(k+a-\delta(a))\|\geq \|\delta(a)+\mathbb K(H)\|,$$
for each $k\in \mathbb K(H)$, thus taking infimum over $k$, $\|a+\mathbb K(H)\|\geq \|\delta(a)+\mathbb K(H)\|$. Indeed, $\tilde\delta$ is an isometry, as the above argument could be symmetrically repeated the other way around. Now, by the same argument applied to amplifications of $\delta$, $\tilde\delta$ is also a complete isometry. Also, $\delta$ is a homomorphism modulo compacts, since
\begin{align*}
	\delta(ab)-\delta(a)\delta(b)&=(\delta(ab)-ab)-(\delta(a)-a)\delta(b)-\delta(a)(\delta(b)-b)\\&+(\delta(a)-a)(\delta(b)-b)\in\mathbb K(H),
\end{align*}
for each $a,b\in A$, therefore, $\tilde\delta$ is a homomorphism. 

Now if $A$ is essentially embedded and meets compacts trivially, the restriction of the quotient map $q:\mathbb B(H)\to\mathbb B(H)/\mathbb K(H)$ to $A$ is a complete isometry. On the other hand, since $B$ is block-triangular, it is essentially embedded by Example \ref{ee}$(ii)$, therefore, the canonical map $q_B: B/(B\cap \mathbb K(H))\to (B+\mathbb K(H))/\mathbb K(H)$ is a complete isometry, and so is the composition map $q_B^{-1}\circ \tilde\delta\circ (q|_A): A\to B/(B\cap \mathbb K(H))$.   
\end{proof}

Two operator algebras $A$ and $B$ are {\it homotopic} if there are homomorphisms $\phi: A\to B$ and $\psi: B\to A$ such that $\phi\circ\psi$ is homotopic to the identity on $B$ and $\psi\circ\phi$ is homotopic to the identity on $A$. It is desirable to show that quasitriangularity passes from one operator algebra to the other when they are homotopic, but at this point it is not clear if the  argument  in \cite[Theorem 5]{v} passes to non selfadjoint operator algebras. In particular, it is not known to us if {\it contractible} operator algebras--i.e., those homotopic to the zero algebra--are quasitriangular.

Fortunately, this could be taken care of for cones. The notions of the {\it cone} and {\it suspension} could be defined for an operator algebra $A$ by $CA:=C_0(0,1]\otimes A$ and since $C_0(0,1]$ is a nuclear $C^*$-algebra, the min and max tensor products are the same here \cite[Proposition 2.9]{pp}. The same holds for the suspension $SA:=C_0(0,1)\otimes A$. If an operator algebra $A$ is sitting in a C*-algebra $D$, then  $CA$ sits in the QD C*-algebra $CD$, and so $CA$ is QT. 

\begin{remark}
	Indeed more is true by a result of Salinas \cite{s}. In order to describe the result of Salinas we need some notations: let $D$ be a unital separable C*-algebra and let $Ext(D)$ be the set of equivalence classes of unital $*$-monomorphisms of $D$ into the Calkin algebra $Q$, and $Ext_{-1}(D)$ be the group of invertible in $Ext(D)$. An extension $\tau$ is said to be QT w.r.t. a closed subalgebra $A$ if $\pi^{-1}(\tau(A))$ is QT, where $\pi$ is the quotient map onto the Calkin algebra. Salinas showed that if $B$ sits in a unital separable C*-algebra $A$ and $\gamma: [0,1]\to Ext_{-1}(D)$ is continuous so that $\gamma(0)$ is QT then so is $\gamma(1)$ \cite[Theorem 5.10]{s}.
\end{remark}

\begin{remark} \label{essentail}
	$(i)$ If the embedding $A\subseteq \mathbb B(H)$ is not essential, then $\tilde A:=A/(A\cap \mathbb K(H))$ has a natural essentail embedding: consider the chain of identification and inclusions
	$$\tilde A= (A+\mathbb K(H))/\mathbb K(H)\subseteq \mathbb B(H)/\mathbb K(H)\subseteq \mathbb B(K),$$
	for a Hilbert space $K$. It is known that the Calkin algebra $\mathbb B(H)/\mathbb K(H)$ is antiliminal \cite[4.7.22(b)]{dix}, so it has no non-zero liminal closed ideal  \cite[page 99]{dix}, in particular,  $(\mathbb B(H)/\mathbb K(H))\cap \mathbb K(K)=0$, thus $\tilde A\cap \mathbb K(K)=0$. 
	
	$(ii)$ Every operator algebra has a faithful essential representation: take any faithful representation $\sigma: A\to \mathbb B(H)$ and let $\pi=\sigma^{(\infty)}: A\to \mathbb B(H^{(\infty)})$ be direct sum of countably many copies of $\sigma$, then $\pi$ is faithful and essential. 
\end{remark}

\begin{remark}\label{qrfd}
$(i)$ Theorem \ref{rfd} provides a non selfadjoint analog of a result of Goodearl and Menal \cite{gm} as follows: 	Every unital, separable operator algebra $A\subseteq \mathbb B(H)$ whose cone $CA:=C_0(0,1]\otimes A$ is essentially embedded in $\mathbb B(L^2(0,1]\otimes H)$, is a quotient of a block triangular operator algebra. Indeed, let us consider a C*-algebra $D$ containing $A$ and observe that $CA$ is contained in $CD$, which is  contractible and so QD. Hence, $CA$ is QT, and so is $({\rm id}\otimes \pi)(CA)$, for any
faithful essential representation $\pi : A \to \mathbb B(H)$ (which exists by Remark \ref{essentail}$(ii)$). Since $\pi$ is faithful,  ${\rm id}\otimes \pi$ is a complete isometry, thus $({\rm id}\otimes \pi)(CA)$ is essentially embedded. By Theorem \ref{rfd}, we get  an essentially embedded
RFD operator algebra $B\subseteq \mathbb B(L^2(\Omega)\otimes H)$ with $$({\rm id}\otimes \pi)(CA)+ \mathbb K(K) = B + \mathbb K(K),$$ for $\Omega:=(0,1]$ and $K:=L^2(\Omega)\otimes H$. Identifying $A$ with $\pi(A)$, we may identify $C\pi(A)=({\rm id}\otimes \pi)(CA)$ with $CA$.  Passing to the Calkin algebra and using the second equality and the fact that both $B$ and $CA$ are essentially embedded, we observe that 
$$CA/(CA\cap \mathbb K(K))\simeq B/(B\cap \mathbb K(K)),$$ complete isometrically. Finally, $$A\simeq CA/SA\simeq\frac{CA/(CA\cap \mathbb K(K))}{(SA+CA\cap \mathbb K(K))/(CA\cap \mathbb K(K))},$$ 
where the last (complete) isometry follows from the  isomorphism theorems for operator algebras \cite[Theorems 3.1.4, 3.1.5]{al} and Remark \ref{rk}$(iv)$ applied to $I:=SA$ and $J:=CA\cap \mathbb K(K)$ (note that $I$ has a cai, since $A$ is unital). Now the completely isometric homomorphism from $CA/CA\cap \mathbb K(K)$ onto $B/B\cap \mathbb K(K)$ maps $(SA+CA\cap \mathbb K(K))/CA\cap \mathbb K(K)$ to a closed ideal $L/B\cap \mathbb K(K)$, for a closed ideal $L$ of $B$ containing $B\cap \mathbb K(K)$, and we have completely isometric isomorphisms 
$$\frac{CA/CA\cap \mathbb K(K)}{(SA+CA\cap \mathbb K(K))/CA\cap \mathbb K(K)}\simeq \frac{B/B\cap \mathbb K(K)}{L/B\cap \mathbb K(K)}\simeq B/L,$$
by the above mentioned isomorphism theorems. Summing up, 
$A$ is (isometrically isomorphic to) a quotient of the block triangular operator algebra $B$.

$(ii)$ Note that the condition of $A$ or $CA$ being essentially embedded is automatic in the selfadjoint case. The assumption of the cone being essentially embedded (which is automatic for C*-algebras) seems to be crucial in the non selfadjoint case (at least in the above proof). However, at this point we don't know examples of non selfadjoint operator algebras with an essentially embedded cone (except trivial examples, such as finite dimensional triangular matrix algebras). 

$(iii)$ The above cited result of Goodearl and Menal fails when $A$ is not separable (even if $H$ is separable).  Indeed Larry Brown has used Zorn lemma to construct maximal quasidiagonal subsets of $\mathbb B(H)$  and Nate Brown have used this to give counterexamples even in the selfadjoint case (c.f. \cite[Remark 3.7]{b}).

$(iv)$  Finally,  the unitality assumption could be relaxed to approximate unitality, i.e., the existence of a net
consisting of real positive elements in $A$ which is a contractive
approximate identity in any C*-algebra which is generated by $A$ as a closed Jordan subalgebra \cite{ble}.  

 $(v)$ By the isomorphism $A\simeq CA/SA$, every operator algebra is a quotient of a QT operator algebra and the quotient could be explicitly given. Also, for any operator algebra $A\subseteq \mathbb B(H)$ with $H$ separable, since $\mathbb B(H)$ is a quotient of an RFD C*-algebra \cite[Excercise 7.1.4]{bo}, every separable operator algebra is a quotient of an RFD operator algebra. We have just proved a stronger result for separable, essentially embedded operator algebras.

\end{remark}

\begin{proof}[Proof of Proposition~\ref{tr}]  By the proof of Theorem \ref{qtr},  we can choose a sequence of unital
complete contractions $\varphi_k: A \to M_{n(k)}(\mathbb C)$ which form an approximate isometric sequence of approximate homomorphisms. Take a WOT-cluster point $\tau$ of $tr_{n(k)}\circ\varphi_k$ and observe that $\tau$ is a state. The tracial property follows from the fact that  the sequence $(\varphi_k)$ is approximately multiplicative.\end{proof}

Note that even nice unital QD $C^*$-algebras (like the unitization
of the compact operators) fail to have a faithful tracial state. On the other hand, the situation could be saved in the simple case. An operator algebra  is {\it simple} if it has no proper closed ideal. A state $\tau$ on an operator algebra $A$ is faithful if $x=0$ for each $x\in A_{+}:=A\cap C^*(A)_{+}$ with $\tau(x)=0$, where $C^*(A)$ is the $C^*$-algebra of generated by $A$ in a faithful representation.	Now every simple unital operator algebra which is contained in a QD C*-algebra has a faithful trace: First note that since QD passes to subalgebras \cite[Proposition 7.1.10]{bo}, the last assumption is the same as requiring that $C^*(A)$ is QD. Choose a tracial state $\tau$ on $C^*(A)$ by \cite[Proposition 7.1.16]{bo}. We use simplicity to show that the restriction of $\tau$ to $A$ is faithful. Let $$I_{ \tau}:=\{x\in C^*(A): \tau(xx^*)=0\}$$ be the ideal kernel of $\tau$. Take $a\in A_{+}:=A\cap C^*(A)_{+}$ with $\tau(a)=0$ and choose $y\in C^*(A)$ with $a=yy^*$. Then $\tau(yy^*)=\tau(a)=0$ and so $y\in I_{\tau}$. But this is an ideal, thus $a=yy^*\in I_{\tau}$. If $a$ is not zero, then we have a nonzero closed ideal $A\cap I_{\tilde\tau}$ of $A$, which by simplicity has to be equal to $A$, which means that $A\subseteq I_{\tau}$. But then $A^*:=\{x^*: x\in A\}\subseteq I_{\tau}$, leading to the equality $I_{\tau}=C^*(A)$, which is absurd, since $A$ is unital and $\tau(1)=1$.

\begin{remark}
$(i)$ The assumption that $A$ is contained in a QD C*-algebra is stronger than assuming that $A$ is itself QT (see Remark \ref{bdry}$(ii)$).

$(ii)$ It doesn't seem plausible to prove the existence of a faithful trace on a simple unital QT operator algebra, using a modification of the above argument: if we start with a tracial state $\tau$ on $A$ (which exists by roposition~\ref{tr} when $A$ is QT), it is not possible in general to  extend it to a trace on $C^*(A)$. 

Indeed, let $A$ be the norm-closed algebra generated by the unilateral shift in $\mathbb B(\ell^2(\mathbb N\cup\{0\}))$. This is completely isometrically isomorphic the disc algebra, which is in turn the (universal) operator algebra of a contraction. 
Since the disc algebra is abelian and its character space is the closed unit disc, all its characters are tracial states. 

On the other hand, if we have a tracial state $\tau$ on the Toeplitz algebra (the $C^*$-algebra generated by the unilateral shift) then it should annihilate the compacts: Let $p_n$ be the orthogonal projection on $\ell^2$ projecting  to the $n$-th coordinate. Then,
$ \tau(VV^*) = \tau(V^*V) = 1$, and so $\tau(p_0) = \tau(I - VV^*) = 0$. Also, 
$\tau(V^2 (V^2)^*) = 1$,  and so $\tau(p_0 + p_1) = \tau(I - V^2 (V^2)^*) = 0$, that is, $\tau(p_1) = 0$. By an inductive argument, we get $\tau(p_n) = 0$, for each $n$. Now consider  the $(n,n)$-compression state $\tau_n$ on the Toeplitz algebra, that is, $\tau_n(x) = p_n x p_n$. This state does not annihilate the compacts, and by the above argument, its restriction to  $A$ gives  zero Fourier scalars (note that, every element in $A$ admits a representation as an analytic function on the open unit disc), and so is tracial.
\end{remark}

\section*{acknowledgement} The authors would like to thank David Blecher,  Ken Davidson and Evgenios Kakariadis  for suggesting  (non) examples and references. The authors also thank the anonymous referee for careful reading of the paper and suggestions to improve the paper.

%-------------------------------------------------------

\end{document}